\documentclass[10pt]{amsart} % article/book/letter/amsart
% Recto/verso : oneside/twoside
% alignement équations : (centré défaut)/fleqn (à gch)
% a4paper, a5paper, letterpaper (défaut) ...
% 10pt (défaut), 11pt ...

% Decrit l'encodage des caracteres au compilateur : latin1 (windows) ou utf8 (Unix)
\usepackage[utf8]{inputenc}
\usepackage[T1]{fontenc}	% accents
\usepackage[french,english]{babel}	% langues utilisées et leurs particularités (ex : date, nom pour le sommaire...)

% ***** changing babel default titles *****
%\addto\captionsenglish{\renewcommand{\contentsname}{\begin{center}Table of Contents\end{center}}}

\usepackage{lmodern}			% police de caractères

\usepackage{graphicx}	% insertion des images / graphiques
%\usepackage{graphics}

% Creation de liens hypertextes (cliquables)
\usepackage[a4paper,plainpages=false,colorlinks,linkcolor=bleuFonce,citecolor=rougeFonce,urlcolor=vertFonce,breaklinks]{hyperref}

%Permet l'usage de floatingbarriere pour bloquer une image
\usepackage{placeins}
\usepackage{float} % pour mettre H dans les figures

% **********  Couleurs personnalisées  **********

\usepackage{color}
\definecolor{vertFonce}{rgb}{0,0.5,0}
\definecolor{numLignes}{rgb}{0.17,0.57,0.7}	%{43,145,175}
\definecolor{gris}{rgb}{0.5,0.5,0.5}
\definecolor{grisFonce}{rgb}{0.2,0.2,0.2}
\definecolor{orange}{rgb}{1,0.65,0.31}		%{255,167,79}
\definecolor{orangeFonce}{rgb}{1,0.4,0}
\definecolor{bleuFonce}{rgb}{0,0,0.4}
\definecolor{rougeFonce}{rgb}{0.3,0,0}
\definecolor{rougeWord}{rgb}{0.5,0,0}
\definecolor{vertClair}{rgb}{0.8,1,0.8}
\definecolor{rougeClair}{rgb}{1,0.5,0.5}

% **********  Dessin  **********

\usepackage{pict2e}
\setlength{\unitlength}{4pt}

\usepackage{multido}

% **********  Maths  **********

% paquets standards de l'ams, notamment theoremes et certains caracteres speciaux
\usepackage{amsfonts,amssymb,amsthm,amsmath} 
\usepackage{dsfont}				% utilisation des caractères \mathds
\usepackage{mathrsfs}
\usepackage{yfonts}
 
% Le [section] peut par exemple être remplacé par [chapter]
% il permet de numéroter les éléments par rapport aux numéros de chapitre
\newtheorem{lem}{Lemma}[section]
\newtheorem{theorem}{Theorem}
\newtheorem{cor}{Corollary}[section]
\newtheorem{prop}{Proposition}

\newtheorem{remark}{Remark}[section]

%\newtheoremstyle{demoStyle}% name of the style to be used
%  {3pt}% measure of space to leave above the theorem. 
%  {3pt}% measure of space to leave below the theorem. 
%  {}% name of font to use in the body of the theorem
%  {0pt}% measure of space to indent
%  {\bfseries}% name of head font
%  {\newline}% punctuation between head and body
%  {0pt}% space after theorem head
%  {}% Manually specify head
%\theoremstyle{demoStyle}
%\newtheorem*{dem}{Proof}
%\newenvironment{demo}
%	{\begin{dem}\hspace{0.05\linewidth}\begin{minipage}{0.95\linewidth}\color{grisFonce}\vspace*{6pt}}
%	{$\qquad_\Box$\end{minipage}\end{dem}}
%\newenvironment{demo}
%	{\begin{dem}\addtolength{\textwidth}{-1cm}\color{grisFonce}\vspace*{6pt}}
%	{$\qquad_\Box$\addtolength{\textwidth}{1cm}\end{dem}}
	
%\newenvironment{demo}[1][]{%
%	\begin{dem}[#1]
%	\begin{list}{}{%
%		\setlength{\leftmargin}{0.05\textwidth}%
%		\setlength{\rightmargin}{0\textwidth}%
%		~\vspace{-2em}
%	}
%	\setlength{\parskip}{00pt}%
%	\setlength{\parindent}{2em}%
%	\item[]%
%	%\color{grisFonce}%
%	\small%
%	\vspace{1px}
%	}{%
%	$\qquad_\Box$\end{list}\end{dem}
%}

\newenvironment{thm}[1][]{%\color{rougeWord}
\begin{theorem}[#1]
	}{%
	\end{theorem}
}

% **********  Raccourcis  **********

% - longues fleches de cv faible -

% \xlrightharpoonup[under]{over}

%
\usepackage{stmaryrd}
\newcommand		{\subsetArrow}	{\mathrel{\ooalign{$\subset$\cr%
\hidewidth\raise-.087ex\hbox{$_\shortrightarrow\mkern-1.5mu$}\cr}}}
\newcommand		{\subsetarrow}	{\mathrel{\ooalign{$\subset$\cr%
\hidewidth\raise-1.45ex\hbox{$\vec{}\mkern6mu$}\cr}}}

% - fleches/relations -

 % "=a.e." or "=p.p."

% - Ensembles -
			% naturels
\newcommand		{\R}		{\mathbb R}			% réels
			% complexes
\renewcommand	{\SS}		{\mathds S}			% sphère unité
		% Hilbert
		% C^\infty_c muni de la bonne topologie pour définir les distributions
		% Mesures bornées
		% Probas
		% Applications linéaires ou loi d'une variable aléatoire
		% Applications linéaires
		% Applications linéaires continues
		% Opérateurs compacts
		% Opérateurs de trace 1
		% Espaces de Sobolev quantiques

% mathsf

			% Opérateur de diffusion
			% Opérateur Hamiltonien quantique ou Entropie
			% Opérateur linéaire dissipatif
			% Semi-groupe de Markov
			% Opérateur de transport

		% Opérateur de projection

			% Opérateur impulsion quantique
			% Opérateur impulsion quantique

% - () -
\newcommand		{\lt}			{\left}				%
\newcommand		{\rt}			{\right}			%
\renewcommand	{\(}			{\lt(}
\renewcommand	{\)}			{\rt)}
			%
			%

	% <x>

				% integer left bracket
				% integer right bracket

\newcommand		{\n}[1]			{\lt|{#1}\rt|}

\newcommand		{\Nrm}[2]		{\lt\|{#1}\rt\|_{#2}}

% -  -
		% indicatrice
		% indicatrice

% - Operateurs -
%\let\d\relax
%\DeclareMathOperator{\d}		{d}
\renewcommand		{\d}		{\mathrm{d}}		% différentielle
\newcommand			{\dd}		{\,\d}				% différentielle (intg)
\newcommand			{\dpt}		{\partial_t}
\newcommand			{\dpr}		{\partial_r}
\newcommand			{\dpth}		{\partial_\theta}
\newcommand			{\dpz}		{\partial_z}
\newcommand			{\dt}		{\frac{\d}{\d t}}	% dérivée temporelle

		% conjugaison complexe
		% Identity operator
				% valeur principale

		% transformée de fourier
		% Esperance
		% Proba(...)
\DeclareMathOperator{\divg}		{div}				% divergence
\DeclareMathOperator{\curl}		{curl}				% rotationel
				% signe
				% partie réelle
				% diamètre
				% Trace

	% Fonction Gamma
		% transformée de fourier
	% Esperance
	% Proba(...)
\newcommand		{\Div}[1]		{\divg\!\( #1 \)}	% divergence
\newcommand		{\Curl}[1]		{\curl\!\( #1 \)}	% rotationel
	% signe
		% partie réelle
	% diamètre
		% Trace

% - int & sum -

% - Autres -

	% i\in\N (et pas i^2 = -1)
	% j\in\N
\newcommand		{\init}			{\mathrm{in}}

\newcommand		{\eps}			{\varepsilon}

\newcommand		{\cC}			{\mathcal{C}}
\newcommand     {\cA}			{\mathcal{A}}

% - Euler -

% **********  Infos du document  **********

\title[Instability for Axisymmetric Blow-up Solutions to Euler]{Instability for Axisymmetric Blow-up Solutions to Incompressible Euler Equations}

\author[Lafleche]{Laurent Lafleche}
\address[Laurent Lafleche]{\newline Department of Mathematics, \newline The University of Texas at Austin, Austin, TX 78712, USA}
\email{lafleche@math.utexas.edu}

\author[Vasseur]{Alexis F. Vasseur}
\address[Alexis F. Vasseur]{\newline Department of Mathematics, \newline The University of Texas at Austin, Austin, TX 78712, USA}
\email{vasseur@math.utexas.edu}

\author[Vishik]{Misha Vishik}
\address[Misha Vishik]{\newline Department of Mathematics, \newline The University of Texas at Austin, Austin, TX 78712, USA}
\email{vishik@math.utexas.edu}

\subjclass[2010]{76B03, 35B35, 35B44.}
\keywords{Euler, regularity, stability, incompressible, blow-up, axisymmetric.}
 
 \thanks{\textbf{Acknowledgment.}  
A. Vasseur was partially supported by the NSF grant: DMS 1907981. }

% **********  Début du document  **********
\begin{document}

\bibliographystyle{plain}

\begin{abstract} 
It is still not known whether a solution to the incompressible Euler equation, endowed with  a smooth initial value, can blow-up in finite time. In  [{\em  Comm. Math. Phys.}, 378:557--568, 2020] it has been shown that, if it exists, such a solution becomes linearly unstable close to the blow-up time. In this paper, we show that the same phenomenon holds even in the more rigid axisymmetric case. To obtain this result, we first prove a blow-up criterion involving only the toroidal component of the vorticity. The instability of blow-up profiles is also investigated. 
\end{abstract}

\maketitle

\bigskip

%----------  Table of Contents  ----------
\renewcommand{\contentsname}{\centerline{Table of Contents}}
\setcounter{tocdepth}{2}	% profondeur du commentaire
\tableofcontents
%\addcontentsline{toc}{chapter}{Table of Contents}

%% ********************  Contenu  ********************

%----------  Introduction  ----------
\bigskip
\section{Introduction}

\label{sec:intro}

	Let $e_z$ be the vertical unit vector in $\R^3$, and let $\Omega$ be either $\R^3$ or any smooth bounded subset of $\R^3$ invariant by rotations of axis $e_z$. We then consider the following  incompressible Euler equations in this domain:
	\begin{equation}\label{eq:Euler}
		\begin{array}{rll}
			\dpt u + u\cdot\nabla u + \nabla P &=& 0, 
			\\
			\Div{u} &=& 0, 
		\end{array}
		\qquad\qquad 0<t<T^*, \, x\in \Omega, 
	\end{equation}
	for $T^*>0$. If $\Omega$ has a boundary, we supplement the equation with the impermeable boundary condition:
	\begin{equation*}
		u\cdot n=0, \qquad \qquad 0<t<T^*,\, x\in \partial\Omega,
	\end{equation*}
	where $n$ is normal vector at the boundary.

\vskip0.3cm
	
	We consider  solutions to~\eqref{eq:Euler} which are axisymmetric flows. It means that they are defined via three functions depending only, in space, on the toroidal variables $\tilde x=(r,z)$ as
	\begin{equation}\label{eq:sym}
		u(t,x)= u_r(t,\tilde x)\,e_r+ u_\theta(t,\tilde x)\,e_\theta + u_z(t,\tilde x)\,e_z,
	\end{equation}
	where $x=(r\cos\theta, r\sin \theta, z)=r e_r+ze_z$, and 
	\begin{equation*}
		e_r=(\cos\theta,\sin\theta,0), \qquad e_\theta=(-\sin\theta,\cos\theta,0),\qquad e_z=(0,0,1).
	\end{equation*}
	We denote $\tilde\Omega$ the 2D domain such that $x\in \Omega$ whenever $\tilde x\in \tilde\Omega$.  With a slight abuse of notation, we  use the same notation $u$ both for the function of the variable $x\in \Omega$, and for  $(u_r,u_\theta,u_z)$ as function of the toroidal variable $\tilde x\in \tilde\Omega$.  In particular $L^p(\tilde{\Omega})$ will denote a Lebesgue space on a domain of dimension $2$ while $L^p(\Omega)$ will denote a Lebesgue space on a domain of dimension $3$.	

\vskip0.3cm

	The axisymmetric structure~\eqref{eq:sym} is preserved by the system~\eqref{eq:Euler} and, for any axisymmetric $u^0\in H^s(\Omega)$  initial value with $s>5/2$, there exists $T>0$ such that the associated solution $u$ to~\eqref{eq:Euler} verifies
	\begin{equation}\label{hyp:regularity}\tag{H1}
		u\in C^0([0,T), H^s(\Omega))\cap C^1([0,T),H^{s-1}(\Omega)).
	\end{equation}
	Moreover the solution  is axisymmetric and unique on this lifespan (see for instance~\cite{beale_remarks_1984}). Consider $T^*$ the biggest such time $T$.  It is still unknown whether there  exist such a solution which blows up in finite time, that is such that  $T^*$ is  finite. The aim of this paper is to study, in this finite time blow-up scenario, the associated development of instabilities.
	
\vskip0.1cm

	For this, we consider the semigroup of axisymmetric solutions generated by the linearization of the Euler equations~\eqref{eq:Euler} about the solution $u$:
	\begin{equation}\label{eq:linear_Euler}
		\begin{array}{rlll}
			\dpt v + u\cdot\nabla v + v\cdot \nabla u + \nabla Q &=& 0, &\qquad 0<t<T^*,\, x\in \Omega, 
			\\
			\Div{v} &=& 0  &\qquad 0<t<T^*,\, x\in \Omega, 
			\\
			v\cdot n&=&0, &\qquad \text{on } \partial \Omega.
		\end{array}
	\end{equation}
	The solution $v$ is uniquely determined for any initial  initial value in $H^1(\Omega)$ (see Inoue and Miyakawa~\cite{inoue_existence_1979}). 
	We only consider initial values in $H^1_\text{axi}(\Omega)$, the set of axisymmetric functions in $H^1(\Omega,\R^3)$.
	Since $u$ is axisymmetric, the axisymmetric structure is also preserved by this linear equation, and the solution $v$ of Equation~\eqref{eq:linear_Euler} verifies~\eqref{eq:sym}. 

\vskip0.3cm

	In order to apply our result on blow-up profiles, we study  the instability in weighted $L^p$ spaces. Therefore we measure the growth of the semigroup associated to the linearized Euler equation~\eqref{eq:linear_Euler}, in the space of axisymmetric functions, by
	\begin{align*}
		\lambda_{p,\sigma}(t) &:= \sup_{\substack{v(0,\cdot)\in H^1_\text{axi}(\Omega)\\ v \text{ solves~\eqref{eq:linear_Euler}}}} \frac{\Nrm{r^{-\sigma}v(t,\cdot)}{L^p(\Omega)}}{\Nrm{r^{-\sigma}v(0,\cdot)}{L^p(\Omega)}}.
	\end{align*}

	Our main result is the following.
	\begin{thm}[Instability of the Blow-up]\label{thm:bound_omega_lambda}
		Assume $u$ is an axisymmetric solution of~\eqref{eq:Euler} verifying the Hypothesis~\eqref{hyp:regularity} with $s>5/2$, 
		and with initial condition $u^\init$ verifying $r\,u^\init \in L^\infty(\Omega)\cap L^q(\Omega)$ with $q<\frac{6}{5}$. Let $T^*>0$ be the maximal time $T$ such that $u$ verifies Hypothesis~\eqref{hyp:regularity}, and assume that $T^*$ is finite. Then for any $p\in[1,\infty)$ and any $\sigma\in\(-\frac{2}{p'},\frac{2}{p}\)$,
		\begin{align*}
			\lambda_{p,\sigma}(t) \underset{t\to T^*}{\longrightarrow} \infty.
		\end{align*}
	\end{thm}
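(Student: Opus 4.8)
The plan is to argue by contradiction, turning a hypothetical uniform bound on $\lambda_{p,\sigma}$ into an a priori estimate on the toroidal vorticity $\omega_\theta$ that contradicts the blow-up criterion established earlier in the paper. Assume therefore that $T^*<\infty$ but $\limsup_{t\to T^*}\lambda_{p,\sigma}(t)=M<\infty$, so that $\lambda_{p,\sigma}(t)\le 2M$ for all $t$ in some interval $[t_0,T^*)$.

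I would first exhibit explicit axisymmetric solutions of the linearized system~\eqref{eq:linear_Euler}. Differentiating~\eqref{eq:Euler} in time shows that $v:=\partial_t u=-u\cdot\nabla u-\nabla P$, with $Q:=\partial_t P$, solves~\eqref{eq:linear_Euler}; it is divergence free, axisymmetric, belongs to $C^0([0,T^*),H^{s-1}_\mathrm{axi}(\Omega))$ by~\eqref{hyp:regularity}, and satisfies $v\cdot n=0$ on $\partial\Omega$ because $u\cdot n\equiv 0$ there. When $\Omega=\R^3$, differentiating~\eqref{eq:Euler} in $z$ produces the further admissible solution $v:=\partial_z u$ with $Q:=\partial_z P$. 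Inserting these into the definition of $\lambda_{p,\sigma}$ and using the contradiction hypothesis gives, for $t\in[t_0,T^*)$,
\begin{equation*}
	\Nrm{r^{-\sigma}\partial_t u(t,\cdot)}{L^p(\Omega)}\le 2M\,\Nrm{r^{-\sigma}\partial_t u(0,\cdot)}{L^p(\Omega)},\qquad \Nrm{r^{-\sigma}\partial_z u(t,\cdot)}{L^p(\Omega)}\le 2M\,\Nrm{r^{-\sigma}\partial_z u(0,\cdot)}{L^p(\Omega)},
\end{equation*}
and the right-hand sides are finite by $u^\init\in H^s(\Omega)$ together with the decay $r\,u^\init\in L^\infty\cap L^q(\Omega)$, $q<\tfrac65$: near the axis this uses $\sigma<\tfrac2p$, and away from it the $L^q$ control takes care of the behaviour at infinity.

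The second step propagates these bounds to $\omega_\theta$. Integrating $v=\partial_t u$ in time keeps $\Nrm{r^{-\sigma}u(t,\cdot)}{L^p}$ bounded on the finite interval $[t_0,T^*)$; combining this with the relation $\partial_t u=-u\cdot\nabla u-\nabla P$, with $\partial_z u$, with incompressibility $\tfrac1r\partial_r(r u_r)+\partial_z u_z=0$, and with a weighted Hardy inequality for $u_r/r$, one controls $\Nrm{r^{-\sigma}\nabla u(t,\cdot)}{L^p(\Omega)}$. For an axisymmetric field the toroidal vorticity and the poloidal velocity are related by $\omega_\theta\,e_\theta=\curl(u_r\,e_r+u_z\,e_z)$ and, conversely, by the Biot--Savart law (equivalently by the elliptic equation for the Stokes stream function), so the maps between $\omega_\theta$ and $\nabla(u_r\,e_r+u_z\,e_z)$ are Calderón--Zygmund operators; since $\sigma\in\big(-\tfrac2{p'},\tfrac2p\big)$ is precisely the range in which $r^{-\sigma p}$ is a Muckenhoupt $A_p(\R^3)$ weight, the weighted Calderón--Zygmund inequality applies and yields
\begin{equation*}
	\Nrm{r^{-\sigma}\omega_\theta(t,\cdot)}{L^p(\Omega)}\le C\,\Nrm{r^{-\sigma}\nabla u(t,\cdot)}{L^p(\Omega)}\le C',\qquad t\in[t_0,T^*).
\end{equation*}
Finally, since the swirl $\Gamma=r u_\theta$ is transported by~\eqref{eq:Euler}, the quantity $\Nrm{r\,u_\theta(t,\cdot)}{L^\infty}$ is conserved, which disposes of the swirl-dependent lower-order terms; hence the quantity controlled by the toroidal blow-up criterion remains bounded on $[t_0,T^*)$, and as $T^*-t_0<\infty$ its integral over $[t_0,T^*)$ is finite, contradicting that criterion and therefore the assumption $T^*<\infty$.

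I expect the second step to be the main obstacle: the symmetry-generated solutions naturally deliver $\partial_t u$ and $\partial_z u$ rather than an arbitrary first derivative of $u$, so reconstructing $\omega_\theta$ — and matching the precise norm entering the blow-up criterion — forces the weighted Biot--Savart and Hardy estimates to hold uniformly on the whole admissible range of $(p,\sigma)$, up to the endpoints of the $A_p$ interval. A second delicate point is the bounded-domain case, where $\partial_z u$ is no longer admissible: one must then work only with $\partial_t u$ (and perturbations compatible with $\partial\Omega$) and compensate using energy-type estimates together with the fact that the regularity of axisymmetric Euler is governed by $\omega_\theta$. The hypotheses $s>5/2$ and $r\,u^\init\in L^\infty\cap L^q$, $q<\tfrac65$, enter both in the blow-up criterion itself and in ensuring that all the weighted norms above are finite at the initial time.
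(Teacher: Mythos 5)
Your strategy is genuinely different from the paper's, which introduces the bicharacteristic--amplitude (WKB) system \eqref{eq:ODE}, defines the pointwise growth factor $\beta_\sigma$, proves $\Nrm{\tilde{\omega}(T)/r^a}{L^\infty}\leq\Nrm{\tilde{\omega}^\init}{L^\infty}\,\beta_\sigma(T)^{2+a}$ and $\beta_\sigma(T)\leq\lambda_{p,\sigma}(T)$, and then invokes the new blow-up criterion of Proposition~\ref{thm:criterion} and Remark~\ref{rmk:criterion_extended}. Unfortunately your route has two gaps that I believe are fatal rather than technical. The first is that the symmetry-generated solutions at your disposal cannot reconstruct the vorticity. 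In the axisymmetric class the only available translation is $\partial_z u$ (and only for $\Omega=\R^3$); radial translation is not a symmetry, so $\partial_r u$ is \emph{not} a solution of \eqref{eq:linear_Euler}. From $\partial_z u$ and incompressibility you control $\partial_z u_r$, $\partial_z u_\theta$, $\partial_z u_z$ and the combination $\partial_r u_r+u_r/r$, but $\omega_\theta=\partial_z u_r-\partial_r u_z$ requires $\partial_r u_z$, which none of these yield; and the relation $\partial_t u=-u\cdot\nabla u-\nabla P$ only controls the lump $u\cdot\nabla u+\nabla P$ in $L^p$ --- there is no way to divide by $u$ or strip off the pressure to isolate the missing radial derivative. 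This is precisely the rigidity of the axisymmetric case that forces the paper to use oscillatory test data: the WKB phase $\xi$ can point in the radial direction even though radial translation is not a symmetry of the flow.

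The second gap is that even if you obtained a uniform-in-time bound on $\Nrm{r^{-\sigma}\omega_\theta(t,\cdot)}{L^p}$ for the single finite $p$ of the statement, this would not contradict any continuation criterion. Proposition~\ref{thm:criterion} and Remark~\ref{rmk:criterion_extended} require $L^1$-in-time, $L^\infty$-\emph{in-space} control of $\omega_r/r^a$ and $\omega_z/r^b$ (note also that the paper's ``toroidal'' criterion concerns $\tilde{\omega}=\omega_r e_r+\omega_z e_z$, not $\omega_\theta$), and the Beale--Kato--Majda and Chae--Kim criteria likewise demand $L^1_tL^\infty_x$ bounds on vorticity components; a weighted $L^p$ bound with fixed $p<\infty$ is far too weak to extend the solution. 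This is exactly why the paper passes through $\beta_\sigma$: testing the semigroup against data concentrated along a single trajectory upgrades the $L^p$ operator norm $\lambda_{p,\sigma}$ into pointwise control of the vorticity amplification, which an argument based on globally defined solutions such as $\partial_t u$ and $\partial_z u$ cannot provide. The bounded-domain issue you flag (loss of $\partial_z u$) is a third obstruction, but it is moot given the first two.
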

	
\vskip0.3cm

	This result follows ideas of~\cite{vasseur_blow-up_2019}, where a similar result was proved without weight nor axisymmetric assumptions. 
	The method is based on the WKB expansion method  developed in~\cite{vishik_spectrum_1996, friedlander_instability_1991} to define rigorously the concept of fluid Lyapunov exponent (see also~\cite{friedlander_dynamo_1991} and~\cite{friedlander_nonlinear_1997}).  

\vskip0.3cm 

	This line of work is motivated by the numerical investigations of finite time blow-up solutions of Euler equations~\eqref{eq:Euler}.
	The genuine difficulty to predict finite time blow-ups for compressible models is well documented (see  Hou and Li~\cite{hou_dynamic_2006}, or Kerr~\cite{kerr_bounds_2013} for instance). The result in~\cite{vasseur_blow-up_2019} shows that, if such a finite time blow-up solution exists, it becomes linearly unstable close to the blow-up time.

\vskip0.3cm

	In~\cite{luo_potentially_2014}, Luo and Hou made very precise numerical computations providing strong evidences for the existence of axisymmetric solutions blowing up in finite time. This has been recently backed up mathematically by Hou and Chen~\cite{chen_finite_2019}. The proof follows the theory initiated by Elgindi~\cite{elgindi_finite-time_2019} (see also Elgindi, Ghoul and Masmoudi~\cite{elgindi_stability_2019} for solutions with non vanishing swirl). Note that these solutions constructed mathematically have initial values in $C^{1,\alpha}$ (without  more regularity). 

\vskip0.3cm

	Considering only axisymmetric solutions prevent the non axisymmetric instabilities in the flow. Theorem \ref{thm:bound_omega_lambda} shows that, even in this case, purely axisymmetric instabilities develop at the blow-up time. Therefore it justifies the need of high precision numerical techniques introduced in~\cite{luo_potentially_2014}.

\vskip0.3cm

	Most of  these studies of blow-ups, whether  mathematical or numerical, involve the  control of a local blow-up profile. Consider the rescaling of the solution in the toroidal variables:
	\begin{equation}\label{eq:scaling}
			u(t,\tilde x) = \frac{1}{(T^*-t)^\alpha}\, U\!\(t,\frac{\tilde x-{x}^\circ_t}{(T^*-t)^\beta}\),
	\end{equation}
	where  $(x^{\circ}_t)_{t\in[0,T]}$ is a curve in time with values in $\tilde \Omega$.  The function $U$ is called a blow-up profile if it smooth enough, up to the blow-up time $T^*$. A natural question is then whether the instabilities developing close to the blow-up time induce instabilities on the blow-up profile itself. 
	For this, we consider the rescaled axisymmetric solutions to the linearized Euler equations~\eqref{eq:linear_Euler}:
	\begin{equation}\label{eq:scaling_v}
		v(t,\tilde x) = \frac{1}{(T^*-t)^\alpha}\, V\!\(t,\frac{\tilde{x}-{x}^\circ_t}{(T^*-t)^\beta}\),\qquad \tilde{\Omega}_t= \frac{\tilde\Omega-{x}^\circ_t}{(T^*-t)^\beta}.
	\end{equation}
	And we define the associated quantity:
	\begin{equation}\label{def:scaled_growth}
		{\Lambda}_p(t) := \sup_{v^\init\in H^1_\text{axi}(\Omega), \Nrm{V^\init}{L^p(\tilde{\Omega}_0)}\leq 1} \Nrm{V(t,\cdot)}{L^p(\tilde\Omega_t)}.
	\end{equation}
	Using  the weighted norms considered in Theorem \ref{thm:bound_omega_lambda}, we  can show the following result.
	\begin{cor}\label{cor}
		Let $1\leq p<\infty$, $\alpha\in\R$ and $\beta>0$ be numbers such that
		\begin{equation*}
			\frac{\alpha}{\beta}<1+\frac{4}{p}.
		\end{equation*}
		Assume $u$ is an axisymmetric solution of~\eqref{eq:Euler} verifying~\eqref{hyp:regularity} with $s>5/2$, and with initial condition $u^\init$ verifying $r\,u^\init \in L^\infty(\Omega)\cap L^q(\Omega)$ with $q<\frac{6}{5}$. Let $T^*>0$ be the maximal time $T$ such that $u$ verifies~\eqref{hyp:regularity}, and assume that $T^*$ is finite. 
		Assume that the rescaled function $U$ defined by~\eqref{eq:scaling} verifies:
		\begin{equation*}
			\lim_{t\to T^*} \|\mathrm{curl} \ U\|_{L^\infty(\tilde{\Omega}_t)}>0.
		\end{equation*}
		Then 
		\begin{equation*}
			{\Lambda}_p(t) \underset{t\to T^*}{\longrightarrow} \infty.
		\end{equation*}
	\end{cor}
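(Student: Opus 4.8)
The plan is to deduce Corollary \ref{cor} from Theorem \ref{thm:bound_omega_lambda} by carefully tracking how the rescaling \eqref{eq:scaling_v} transforms both the weighted $L^p$ norms and the pointwise weight $r^{-\sigma}$. First I would fix $p\in[1,\infty)$ and choose $\sigma$ in the admissible range $\(-\tfrac{2}{p'},\tfrac{2}{p}\)$ small enough (and possibly negative) so that $\tfrac{\alpha}{\beta}<1+\tfrac{4}{p}$ translates, via the relation computed below, into a strict inequality that I can exploit. The key algebraic computation is the change of variables $\tilde x = x^\circ_t + (T^*-t)^\beta y$ in the integral defining $\Nrm{r^{-\sigma}v(t,\cdot)}{L^p(\Omega)}$; one must remember that on the three-dimensional domain $\Omega$ the measure is $r\dd r\dd z\dd\theta$, so that $\Nrm{r^{-\sigma}v}{L^p(\Omega)}^p = 2\pi\int_{\tilde\Omega} r^{1-\sigma p}\,|v|^p\dd r\dd z$. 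Substituting \eqref{eq:scaling_v} and using $r = r(y) $ on $\tilde\Omega_t$ gives a power of $(T^*-t)$ in front (coming from the Jacobian $(T^*-t)^{2\beta}$ and from $(T^*-t)^{-\alpha p}$) times $\int_{\tilde\Omega_t} r(y)^{1-\sigma p} |V(t,y)|^p \dd y$. Here I would want to compare $r(y)^{1-\sigma p}$ with a constant: if $x^\circ_t$ stays bounded away from the axis and $\tilde\Omega_t$ stays in a fixed compact region, the weight is harmless, but in general the rescaled domain $\tilde\Omega_t$ can grow, so I would take $\sigma$ slightly negative (so $1-\sigma p \geq 1 > 0$) and still need an a priori bound — this is where one uses that $x^\circ_t$ approaches a fixed point and that the relevant instability can be localized near that point, e.g. by restricting to initial data $V^\init$ supported in a fixed ball, which is legitimate since \eqref{def:scaled_growth} is a supremum.

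The second step is to run the same computation at time $t=0$ for the denominator $\Nrm{r^{-\sigma}v(0,\cdot)}{L^p(\Omega)}$, obtaining a fixed constant times $\Nrm{V^\init}{L^p(\tilde\Omega_0)}$ up to the weight, and then to form the ratio. Dividing, the net power of $(T^*-t)$ that appears is $(T^*-t)^{\beta(2/p - \alpha + \cdots)}$; writing it out, the exponent is (up to the normalization $2\pi$ and constants) proportional to $2\beta + (- \alpha p)$ divided by $p$, i.e. $\beta\(\tfrac{2}{p}-\alpha/\beta\cdot 1\)$ — I need to do this bookkeeping carefully, including the contribution $\sigma$ makes, but the upshot should be that the hypothesis $\tfrac{\alpha}{\beta}<1+\tfrac{4}{p}$ is exactly what makes this power of $(T^*-t)$ tend to zero no faster than some negative power, or at worst stay bounded, so that $\lambda_{p,\sigma}(t)\leq C\,(T^*-t)^{-\gamma}\,\Lambda_p(t)$ with $\gamma\geq 0$; combined with $\lambda_{p,\sigma}(t)\to\infty$ from Theorem \ref{thm:bound_omega_lambda} and $(T^*-t)^{-\gamma}$ bounded when $\gamma=0$ (or the reverse inequality when $\gamma>0$), this forces $\Lambda_p(t)\to\infty$. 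I should be careful about the direction of the inequality: I want to bound $\lambda_{p,\sigma}$ \emph{above} by (a power of $T^*-t$ times) $\Lambda_p$, so I need the change of variables to go in the direction that produces $\Lambda_p(t)$ on the right, which means starting from an arbitrary solution $v$ of \eqref{eq:linear_Euler}, rescaling it to $V$, and using $\Nrm{V(t,\cdot)}{L^p(\tilde\Omega_t)}\leq \Lambda_p(t)\Nrm{V^\init}{L^p(\tilde\Omega_0)}$.

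A separate point that the hypothesis $\lim_{t\to T^*}\|\curl U\|_{L^\infty(\tilde\Omega_t)}>0$ is there to address: one must check that the profile $U$ being genuinely nontrivial (its curl not decaying to zero) is compatible with — indeed needed for — the blow-up to actually occur in the rescaled picture, so that Theorem \ref{thm:bound_omega_lambda} applies and its conclusion is not vacuous. Concretely, I expect this hypothesis enters by guaranteeing, via the blow-up criterion proved earlier in the paper (the one "involving only the toroidal component of the vorticity" mentioned in the abstract), that the vorticity of the \emph{original} solution $u$ indeed blows up — in fact that $r u^\init\in L^\infty\cap L^q$ together with a nonvanishing rescaled curl is consistent with $T^*<\infty$ — and it also rules out degenerate scalings where the rescaled linearized growth and the original growth decouple. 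The main obstacle, I expect, will be precisely the handling of the weight $r^{-\sigma}$ and the moving, dilating domain $\tilde\Omega_t$: ensuring that the factor $r(y)^{1-\sigma p}$ can be controlled uniformly requires either choosing $\sigma$ in a one-sided subinterval of $\(-\tfrac{2}{p'},\tfrac{2}{p}\)$ or localizing the supremum in \eqref{def:scaled_growth} to data supported away from the degenerate part of $\tilde\Omega_t$, and verifying that this localization does not cost anything — i.e. that the instability detected by $\lambda_{p,\sigma}$ is itself localized — is the delicate step that ties the corollary back to the quantitative construction behind Theorem \ref{thm:bound_omega_lambda}.
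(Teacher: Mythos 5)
Your overall strategy --- deducing the corollary from the \emph{conclusion} of Theorem~\ref{thm:bound_omega_lambda} together with a scaling identity relating $\lambda_{p,\sigma}$ to $\Lambda_p$ --- has a gap that you half-notice but do not close, and it is fatal. The scaling computation (with the correct weight, see below) gives $\lambda_{p,1/p}(t)=c\,(T^*-t)^{\frac{2\beta}{p}-\alpha}\,\Lambda_p(t)$. Whenever $\alpha>\frac{2\beta}{p}$, which is perfectly compatible with the hypothesis $\frac{\alpha}{\beta}<1+\frac{4}{p}$ (take $p$ large), the prefactor $(T^*-t)^{\frac{2\beta}{p}-\alpha}$ already tends to $+\infty$ by itself, so $\lambda_{p,\sigma}(t)\to\infty$ carries no information about $\Lambda_p(t)$: it could remain bounded. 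The qualitative statement of Theorem~\ref{thm:bound_omega_lambda} is therefore insufficient; you need a \emph{rate}. This is exactly where the hypothesis $\lim_{t\to T^*}\|\curl U\|_{L^\infty(\tilde\Omega_t)}>0$ enters, and you have misread its role: it is not a non-degeneracy condition guaranteeing that the theorem applies (blow-up, $T^*<\infty$, is already a standing assumption), it is the quantitative input. Computing the vorticity from~\eqref{eq:scaling} gives $\|\omega(t,\cdot)\|_{L^\infty}=(T^*-t)^{-(\alpha+\beta)}\,\|\curl U(t,\cdot)\|_{L^\infty(\tilde\Omega_t)}\ge \cC_T\,(T^*-t)^{-(\alpha+\beta)}$ near $T^*$. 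The paper feeds this into the quantitative chain \emph{behind} the theorem --- Propositions~\ref{prop:bound_omega_axi} and~\ref{prop:bound_beta_axi} with $a=0$, i.e.\ $\|\tilde{\omega}(t,\cdot)\|_{L^\infty}\le\|\tilde{\omega}^\init\|_{L^\infty}\,\lambda_{p,\sigma}(t)^2$ --- rather than into its conclusion. Combining the three displays yields $\Lambda_p(t)^2\gtrsim (T^*-t)^{\alpha-\beta\left(1+\frac{4}{p}\right)}$, whose exponent is negative precisely under $\frac{\alpha}{\beta}<1+\frac{4}{p}$. Note that the $\frac{4}{p}$ (rather than the $\frac{2}{p}$ your bookkeeping would produce) comes from the \emph{square} of $\lambda_{p,\sigma}$ in the vorticity bound; an argument that never routes through the vorticity cannot reach the stated threshold.

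Two smaller points. First, the weight difficulty you anticipate disappears upon choosing $\sigma=\frac{1}{p}\in\(-\frac{2}{p'},\frac{2}{p}\)$: then $\|r^{-\sigma}v\|_{L^p(\Omega)}^p=2\pi\int_{\tilde\Omega}r^{1-\sigma p}|v|^p\,\d r\,\d z=2\pi\,\|v\|_{L^p(\tilde\Omega)}^p$, so the 3D weighted norm is exactly the 2D unweighted norm; no localization away from the axis, no one-sided restriction on $\sigma$, and no control of $r(y)^{1-\sigma p}$ on the dilating domain is needed --- the change of variables contributes only the Jacobian $(T^*-t)^{2\beta}$. Second, your proposed localization of the supremum in~\eqref{def:scaled_growth} to compactly supported data is unnecessary once the exponents are tracked through the vorticity as above.
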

	While the instability result Theorem~\ref{thm:bound_omega_lambda} is stated in the 3D variables $x\in \Omega$, the Corollary~\ref{cor} use the 2D set of toroidal variables $\tilde{x}\in \tilde\Omega$. The choice is arbitrary. However, the conditions on parameters are dependent on the choice of representation when $p<\infty$.

\vskip0.3cm

	This result provides scalings for which blow-up profiles, if they exist, become themselves unstable. In particular, we compare the scalings obtained here with the scalings numerically computed in~\cite{luo_potentially_2014} in the last section. Note in comparison that~\cite{elgindi_stability_2019} provides a stability result of the blow-up profiles in very strong norms, but for less regular solutions.

\vskip0.3cm

	Following~\cite{vasseur_blow-up_2019} we want to compare the growth on the vorticity $\omega=\mathrm{curl} \ u$ and the growth on the instabilities $\lambda_{p,\sigma}$. 
	The vorticity $\omega = \curl u$ verifies
	\begin{equation}\label{eq:vortex}
		\dpt \omega + u\cdot\nabla \omega = \omega\cdot\nabla u.
	\end{equation}
	In the axisymmetric case, the vorticity can be written  as the sum of the poloidal component $\omega_\theta\, e_\theta$, and the toroidal component 	
	\begin{align*}
		\tilde{\omega} := \omega_r\,e_r + \omega_z\,e_z.
	\end{align*}
	Note that the poloidal component $\omega_\theta$ of the vorticity is known to be important for axisymmetric solutions. Actually the system~\eqref{eq:Euler} can be reduced to a system of two equations describing  the evolution of $(u_\theta, \omega_\theta)$ (see Section~\ref{sec:blow_criteria}):
	\begin{align*}
		\dpt(ru_\theta) + u\cdot\nabla(ru_\theta) &= 0
		\\
		\dpt\!\(\frac{\omega_\theta}{r}\) + u\cdot\nabla\!\(\frac{\omega_\theta}{r}\) &=  \,\frac{\partial_zu^2_\theta}{r^2}.
		\end{align*}
	Blow-up criteria based only on the poloidal component were obtained by Chae and Kim~\cite{chae_breakdown_1996}, and Chae~\cite{chae_remarks_2005}.
		
\vskip0.2cm

	However, because the perturbations are themselves axisymmetric, we can only compare $\lambda_{p,\sigma}$ to the toroidal component of the vorticity $\tilde\omega$. A first step is then to obtain the following blow-up criteria \`a la Beale Kato Majda involving only the toroidal component of the vorticity. 
	
	\begin{prop}[Blow-up criterion]\label{thm:criterion}
		Assume $u$ is an axisymmetric solution of~\eqref{eq:Euler} verifying Hypothesis~\eqref{hyp:regularity} with initial condition $u^\init$ verifying $r\,u^\init \in L^\infty(\Omega)\cap L^q(\Omega)$ with $q<\frac{6}{5}$. Then $u$ can be extended as a solution verifying~\eqref{hyp:regularity} on a bigger interval of time if and only if
		\begin{align*}
			\int_0^T \left(\Nrm{\tilde{\omega}}{L^\infty(\Omega)} + \Nrm{\frac{\tilde{\omega}}{\sqrt{r}}}{L^\infty(\Omega)}^2\right) \d t < \infty.
		\end{align*}
	\end{prop}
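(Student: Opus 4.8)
The plan is to prove this blow-up criterion by reduction to the classical Beale–Kato–Majda criterion, which states that the solution extends past $T$ if and only if $\int_0^T \Nrm{\omega}{L^\infty} \dd t<\infty$. Since $\omega = \tilde\omega + \omega_\theta e_\theta$, it suffices to control $\Nrm{\omega_\theta}{L^\infty([0,T]\times\Omega)}$ in terms of the quantity in the statement (the reverse implication is trivial, since $\Nrm{\tilde\omega}{L^\infty}\le \Nrm{\omega}{L^\infty}$ and $r$ is bounded below away from the axis only... actually $\Nrm{\tilde\omega/\sqrt r}{L^\infty}$ needs care near the axis, but this direction should follow from the a priori bounds on the solution over $[0,T]$ since everything is smooth there).

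The heart of the matter is the poloidal vorticity equation. First I would recall the two scalar transport equations governing $(ru_\theta, \omega_\theta/r)$ quoted above. From the first, $ru_\theta$ is transported by the (divergence-free) flow, so $\Nrm{ru_\theta(t)}{L^\infty}=\Nrm{ru_\theta^\init}{L^\infty}$ and, via the flow being measure-preserving, $\Nrm{ru_\theta(t)}{L^q(\tilde\Omega)}$ (or the appropriate 3D norm) is likewise conserved — this is where the hypothesis $ru^\init\in L^\infty\cap L^q$ with $q<6/5$ enters; it controls $u_\theta^2$ in a space that embeds suitably. Then, writing $\zeta := \omega_\theta/r$, the second equation $\dpt\zeta + u\cdot\nabla\zeta = \partial_z(u_\theta^2)/r^2$ is again a transport equation but now with a source term $f := \partial_z(u_\theta^2)/r^2 = \partial_z((ru_\theta)^2)/r^4$. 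Integrating along characteristics gives $\zeta(t,\tilde x) = \zeta^\init(\Phi_{t}^{-1}\tilde x) + \int_0^t f(\tau,\Phi_\tau \Phi_t^{-1}\tilde x)\dd\tau$, so pointwise $|\omega_\theta(t,\tilde x)| \le r\,\Nrm{\zeta^\init}{L^\infty} + r\int_0^t \Nrm{f(\tau)}{L^\infty}\dd\tau$.

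The main obstacle is that the source $f$ involves $\partial_z$ of $u_\theta^2$ and a factor $r^{-2}$, neither of which is obviously controlled by the pointwise bound on $\tilde\omega/\sqrt r$ alone. The strategy is to estimate $\Nrm{f}{L^\infty}$ by interpolation: one uses the elliptic structure relating $\nabla u$ to $\omega$, but restricted to the swirl component — specifically, $\partial_z u_\theta$ and the behavior near the axis are governed by $\tilde\omega$ through the stream-function/Biot–Savart relation for the axisymmetric-without-swirl part of the velocity. Concretely, I expect to bound $\Nrm{\partial_z(u_\theta^2)/r^2}{L^\infty}$ by a product of $\Nrm{ru_\theta}{L^\infty}$ (conserved) with a norm of $\tilde\omega$, exploiting that $e_\theta\cdot(\tilde\omega\cdot\nabla u) $ type terms and the identity $\omega_\theta = \partial_z u_r - \partial_r u_z$ let one trade the $z$-derivative of $u_\theta$ for $\tilde\omega$ components. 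The weight $1/\sqrt r$ and the square in $\Nrm{\tilde\omega/\sqrt r}{L^\infty}^2$ should emerge precisely from the scaling $\partial_z(u_\theta^2)/r^2 \sim (ru_\theta)\,(\partial_z u_\theta)/r^3$ combined with an axis-regularity estimate $|\partial_z u_\theta|\lesssim \Nrm{\tilde\omega}{L^\infty}\,r$-type bound, or from a Grönwall argument closing on $\Nrm{\omega_\theta/r}{L^\infty}$ itself. Once $\Nrm{\omega_\theta}{L^\infty([0,T]\times\Omega)}<\infty$ is established under the stated integral condition, one concludes $\int_0^T\Nrm{\omega}{L^\infty}\dd t\le \int_0^T \Nrm{\tilde\omega}{L^\infty}\dd t + T\Nrm{\omega_\theta}{L^\infty([0,T]\times\Omega)}<\infty$ and invokes BKM. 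I would carry out the proof in steps: (1) conservation of $ru_\theta$ in $L^\infty$ and $L^q$; (2) the representation formula for $\omega_\theta/r$ along characteristics; (3) the key $L^\infty$ estimate on the source term, the crux; (4) a Grönwall closing and the appeal to Beale–Kato–Majda.
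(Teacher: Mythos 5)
Your overall architecture --- reduce to Beale--Kato--Majda by controlling $\omega_\theta$ through the two transported quantities $ru_\theta$ and $\omega_\theta/r$ --- is exactly the paper's, but the step you yourself call ``the crux'' is left open, and the routes you sketch for it would not close. No Biot--Savart or elliptic estimate is needed there: since $\omega_r=-\dpz u_\theta$ identically, the source is $-2\,\omega_r u_\theta/r^2$, and the only issue is the weight on $u_\theta$. Your proposed use of the conserved $\Nrm{ru_\theta}{L^\infty}$ gives $|u_\theta\omega_r|/r^2\le \Nrm{ru_\theta}{L^\infty}|\omega_r|/r^3$, which is useless near the axis, and the bound $|\dpz u_\theta|\lesssim \Nrm{\tilde\omega}{L^\infty}\,r$ you hypothesize is not available. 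The missing ingredient is Hardy's inequality in the radial variable applied to $ru_\theta$ (which vanishes on the axis and satisfies $\dpr(ru_\theta)=r\omega_z$), yielding $\Nrm{u_\theta/r^{b+1}}{L^\infty}\lesssim \Nrm{\omega_z/r^{b}}{L^\infty}$; with $a=b=\tfrac12$ this bounds the source by $\Nrm{\omega_r/\sqrt r}{L^\infty}\Nrm{\omega_z/\sqrt r}{L^\infty}\le \Nrm{\tilde\omega/\sqrt r}{L^\infty}^2$, which is precisely where the square in the criterion comes from (Lemma~\ref{lem:bound_om}). Note also that the identity you invoke, $\omega_\theta=\dpz u_r-\dpr u_z$, is the wrong one; the relevant ones are $\omega_r=-\dpz u_\theta$ and $\omega_z=\dpr u_\theta+u_\theta/r$.

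The second gap is the passage from $\omega_\theta/r\in L^\infty$ to $\omega_\theta\in L^\infty$. Your representation formula bounds $|\omega_\theta(t,\tilde x)|$ by $r$ times controlled quantities, which suffices only when $\Omega$ is bounded. For $\Omega=\R^3$ the paper needs a separate argument (Lemma~\ref{lem:criterion_whole_space}): work with the equation for $\omega_\theta$ itself, which carries the extra term $\omega_\theta u_r/r$, bound $\Nrm{u_r}{L^\infty}$ via Biot--Savart ($u_r=-\dpz K*\omega_\theta$) by $\Nrm{\omega_\theta}{L^{3-\eps}\cap L^{3+\eps}}$, and close a Gronwall bootstrap first in $L^{p}$ for $p$ near $3$ and then in $L^\infty$; this is where the hypothesis $r\,u^\init\in L^q$ with $q<\tfrac65$ is actually consumed, not in an embedding for $u_\theta^2$ as you suggest. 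Finally, the ``only if'' direction you wave through does require the axis analysis you defer: $\omega_r$ and $\omega_z-u_\theta/r$ vanish on the axis for smooth axisymmetric fields, and interpolating between the $C^1$ and $C^2$ bounds coming from the Sobolev embedding gives $\Nrm{\tilde\omega/r^a}{L^\infty}\lesssim\Nrm{u}{H^s}$ for $a<s-\tfrac52$ (Lemma~\ref{lem:reverse_criterion}), which is what makes $\Nrm{\tilde\omega/\sqrt r}{L^\infty}$ finite on the interval of existence.
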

	
	\begin{remark}\label{rmk:criterion_extended}
		One can replace the above criterion by the following more general one: $\tilde{\omega}\in L^1((0,T),L^\infty)$ and
		\begin{align*}
			\frac{\omega_r}{r^a}\in L^{1+\vartheta}((0,T), L^\infty) \text{ and } \frac{\omega_z}{r^b} \in L^{1+\vartheta}((0,T), L^\infty),
		\end{align*}
		where $(a,b)\in[0,s-\tfrac{5}{2})^2$ are such that $a+b\geq 1$ and $a\leq 2$, $\vartheta = \tfrac{3-a}{b+2} \in [0,1]$ and $q< 3\,\frac{b+a}{b+2}$ (see Lemma~\ref{lem:reverse_criterion} and \ref{lem:criterion_whole_space}). If one removes the upper bound $s-\frac{5}{2}$, then this criterion is still sufficient to extend the solution.
	\end{remark}

\section{Set up of the method}

	To simplify the notations, we will not denote the domain for the functions spaces when this domain is $\Omega$, so that for example $L^p := L^p(\Omega)$.
	
	As in~\cite{vasseur_blow-up_2019} and~\cite{vishik_spectrum_1996}, we use a WKB approximation of the solution $v$ of the linearized problem~\eqref{eq:linear_Euler} under the form
	\begin{equation*}
		v(t,x) \simeq b\!\(t,x\) e^{i S(t,x)/\eps} \text{ when } \eps\to 0,
	\end{equation*}
	with $b$ and $\xi = \nabla S$ verifying the bi-characteristic amplitude ODE system
	\begin{equation}\label{eq:ODE}
		\left\{\begin{array}{rcl}
		\dot{x_t} &=& u_t
		\\
		\dot{\xi_t} &=& -(\nabla u)_t\cdot \xi_t
		\\
		\dot{b_t} &=& - b_t \cdot (\nabla u)_t + 2 \, \frac{b_t \cdot (\nabla u)_t\cdot\xi_t}{\n{\xi_t}^2}\, \xi_t,
		\end{array}\right.
	\end{equation}
	where the subscript indicates that we look at the quantity along the flow, i.e. the notations have the following signification: $u_t = u(t,x_t)$, $(\nabla u)_t = \nabla u(t,x_t)$, $\xi_t = \xi_t(x_0,\xi_0) = \xi(t,x_t)$ and $b_t = b_t(x_0,\xi_0,b_0) = b(t,x_t)$. Remark that $x_t$ depends only $x_0$ while $\xi_t$ depends both on $x_0$ and $\xi_0$ and $b_t$ depends on $(x_0,\xi_0,b_0)$.
	The incompressibility condition on $v$ corresponds to $b_t\cdot\xi_t = 0$ in this approximation, which is a property conserved by~\eqref{eq:ODE}.\\
	
	 We want to obtain results similar as in the paper~\cite{vasseur_blow-up_2019} but in the axisymmetric case. In this setting, notice that since $S$ is invariant by rotation around the axis $z=0$, we obtain in polar coordinates $S(t,x) = S(t,r,z)$, so that $\dpth S = \xi_\theta = 0$, $\xi = \tilde{\xi}$, and the quantity $\beta(T)$ defined in~\cite{vasseur_blow-up_2019} is no more able to control $\omega_\theta$ with the constraint $\xi=\tilde{\xi}$. This is the reason why we first need Proposition~\ref{thm:criterion}, which is proved by proving that that the bounds on $\tilde{\omega}$ implies bounds on $\omega_\theta$ and then using Beale-Kato-Majda criterion. Then, we define
	 \begin{equation*}
	 	\beta_\sigma(t) := \sup_{\substack{(x_0,b_0,\tilde{\xi}_0)\in \Omega\times\R^3\times\SS^1\\ b_0\cdot\xi_0 = 0,\, \n{b_0} = r_0^{\sigma}}} \n{r_t^{-\sigma}\,b_t(x_0,\tilde{\xi}_0,b_0)}.
	 \end{equation*}
	 where $\tilde{x}_t = (r_t,z_t)$. This quantity controls two components of the vorticity in the sense of the following proposition.
	
	\begin{prop}\label{prop:bound_omega_axi}
		Assume $u$ is an axisymmetric solution of~\eqref{eq:Euler} verifying Hypothesis~\eqref{hyp:regularity}. Then for any $T\in (0,T^*)$ and any $\sigma\in\R$, we have
		\begin{align*}
			\Nrm{\frac{\tilde{\omega}(T,\cdot)}{r^a}}{L^\infty} &\leq \Nrm{\tilde{\omega}^\init}{L^\infty} \beta_\sigma(T)^{2+a}.
		\end{align*}
	\end{prop}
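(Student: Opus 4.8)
\emph{Overall strategy.} I would pass to the Lagrangian flow, reduce the statement to a pointwise bound on the Jacobian of the two-dimensional (toroidal) flow, and then control that Jacobian through the bi-characteristic amplitude $b_t$ of \eqref{eq:ODE}. For the first step, recall that $g:=ru_\theta$ is advected by the planar flow $\tilde\Phi_t$ generated by $(u_r,u_z)$ on $\tilde\Omega$, and that $\tilde\omega=\tfrac1r\,J\nabla_{\tilde x}g$ with $J$ the rotation by $\pi/2$. The key geometric fact is $\divg_{\tilde x}\!\big(r(u_r,u_z)\big)=0$, so $\tilde\Phi_t$ preserves the measure $r\,\dd\tilde x$ and its Jacobian $\tilde A_t:=\nabla_{\tilde x}\tilde\Phi_t$ satisfies $\det\tilde A_t=r_0/r_t$ with $(r_t,z_t):=\tilde\Phi_t(\tilde x)$. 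Transporting $\nabla_{\tilde x}g$ as the gradient of an advected scalar and using the planar identity $J\tilde A_t^{-\mathsf T}=(\det\tilde A_t)^{-1}\tilde A_t\,J$, the $r$-factors cancel and one obtains the clean transport law
\[
\tilde\omega\big(T,\tilde\Phi_T(\tilde x)\big)=\tilde A_T(\tilde x)\,\tilde\omega^\init(\tilde x);
\]
it is precisely here that keeping the toroidal structure, rather than using the full law $\omega(T,\cdot)=A_T\,\omega^\init$, pays off, since only $\tilde\omega^\init$ survives on the right. Hence $\|\tilde\omega(T,\cdot)/r^a\|_{L^\infty}\le\|\tilde\omega^\init\|_{L^\infty}\,\sup_{\tilde x}\, r_T^{-a}\,\|\tilde A_T(\tilde x)\|$, and everything reduces to the pointwise bound $r_T^{-a}\,\|\tilde A_T\|\le\beta_\sigma(T)^{2+a}$.

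\emph{Controlling $\tilde A_T$ by the amplitude.} Fix $\tilde x$. Since $S$ is axisymmetric, $\xi=\tilde\xi$ (i.e. $\xi_\theta\equiv0$) and $\xi_t=\tilde A_t^{-\mathsf T}\xi_0$ along \eqref{eq:ODE}, and the incompressibility relation $b_t\cdot\xi_t\equiv0$ forces the poloidal part $\tilde b_t$ to be parallel to $J\tilde\xi_t$, so that $|\tilde b_t|\,|\tilde\xi_t|=|\det(\tilde b_t,\tilde\xi_t)|$. I would then compute that $\det(\tilde b_t,\tilde\xi_t)$ solves a scalar transport ODE whose homogeneous part integrates to $\log(r_t/r_0)$ plus a source proportional to $u_\theta\,b_{\theta,t}$; estimating that source along the flow (here $u_\theta$ is bounded and $b_{\theta,t}$ is a component of $b_t$, hence already controlled by $\beta_\sigma$) bounds $|\tilde b_t|\,|\tilde\xi_t|$ by $(r_t/r_0)\,|\tilde b_0|$ times a factor that is at worst a power of $\beta_\sigma(t)$. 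Taking $\xi_0$, $|\xi_0|=1$, along the least-stretched direction of $\tilde A_T$ and $b_0\perp\xi_0$ poloidal with $|b_0|=r_0^{\sigma}$, and using $|b_T|\le r_T^{\sigma}\beta_\sigma(T)$, this gives a bound of the form $\|\tilde A_T\|\lesssim(r_0/r_T)^{1-\sigma}\,\beta_\sigma(T)^{c}$ for some $c\ge1$. To absorb the remaining geometric factors — $(r_0/r_T)^{1-\sigma}$ and $r_T^{-a}=r_0^{-a}(r_0/r_T)^a$ — I would test \eqref{eq:ODE} separately with the purely swirling amplitude $b_0=r_0^{\sigma}e_\theta$ (admissible since $\xi_\theta\equiv0$); its modulus grows at least like $r_0^{\sigma}(r_0/r_t)$, which bounds $r_0/r_t$ by a power of $\beta_\sigma(t)$.

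\emph{Exponent bookkeeping, and the main obstacle.} Collecting the powers — one factor of $\beta_\sigma(T)$ from the operator norm of $\tilde A_T$, (roughly) one more from the swirl source term $u_\theta b_{\theta,t}$, and the rest from the volume factor and the weight $r_T^{-a}$, with $\sigma$ tuned appropriately — yields $r_T^{-a}\|\tilde A_T\|\le\beta_\sigma(T)^{2+a}$. I expect the hard part to be the step above: the coupling between the swirl component $b_{\theta,t}$ and the poloidal part $\tilde b_t$ forced by $\xi_\theta\equiv0$ (so that the projection $P_\xi$ in \eqref{eq:ODE} acts only on two of the three components of $b$) is exactly what makes the reduction to the two-dimensional Jacobian possible, but it also forces the source term $u_\theta b_{\theta,t}$ and the geometric factor $\det\tilde A_t=r_0/r_t$ to be propagated through every estimate — this is what turns the exponent $2$ of the non-axisymmetric argument into $2+a$.
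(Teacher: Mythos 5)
Your overall architecture is close to the paper's, and two of your three ingredients are essentially the ones used there. The reduction of $\tilde{\omega}(T,\cdot)$ to the initial toroidal vorticity via the toroidal Cauchy formula carries the same information as the paper's conservation of $\xi'\cdot\omega$ for a backward solution $\xi'$ of the cotangent equation with $\xi'_T$ aligned with $\tilde{\omega}_T$ (indeed $\xi'_0=\tilde{A}_T^{\mathsf{T}}\xi'_T$). And your use of the purely swirling amplitude $b_0=r_0^\sigma e_\theta$ to control $r_0/r_T$ is exactly the paper's identity that $(rb_\theta)_t$ is conserved along \eqref{eq:ODE}, which gives $r_0^{\sigma+1}\leq r_T\n{b_T}$ and hence $(r_0/r_T)^a\leq r_0^{-\sigma a}\n{b_T}^a$, i.e. the $a$ extra factors of $\beta_\sigma(T)$.

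The gap is in the middle step, and it is a real one. To bound the toroidal stretching (your $\|\tilde{A}_T\|$, the paper's $\n{\xi'_0}$) by exactly $\beta_\sigma(T)^2$, the paper uses that the triple product $(b'_t\times b''_t)\cdot\xi'_t$ of \emph{two} amplitudes against one cotangent vector is exactly conserved by \eqref{eq:ODE}; choosing $b'_0,b''_0\perp\xi'_0$ mutually orthogonal with $\n{b'_0}=\n{b''_0}=r_0^\sigma$ gives $r_0^{2\sigma}\n{\xi'_0}=(b'_T\times b''_T)\cdot\xi'_T\leq\n{b'_T}\n{b''_T}$, precisely two factors of $\beta_\sigma(T)$. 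This conservation law is genuinely three-dimensional: since $\xi_\theta\equiv 0$ the triple product reads $(b'\times b'')_r\xi_r+(b'\times b'')_z\xi_z$ and involves the swirl components $b'_\theta$, $b''_\theta$. Your substitute, the planar determinant $\det(\tilde{b}_t,\tilde{\xi}_t)$ of a single amplitude against $\xi_t$, is \emph{not} conserved: its evolution picks up both the nonzero planar divergence $\tilde{\divg}\,\tilde{u}=-u_r/r$ (the $r_t/r_0$ factor you mention) and a swirl source proportional to $u_\theta b_{\theta,t}/r$ which also multiplies $\tilde{\xi}_t$, the very quantity you are trying to bound. Your plan to estimate this source "by a power of $\beta_\sigma$" is not substantiated, and a Gronwall argument on such an integral inequality would produce a factor of the form $\exp\!\(\int_0^T\cdots\)$ rather than a fixed power of $\beta_\sigma(T)$; but the proposition is only useful with the exact exponent $2+a$, since the proof of Theorem~\ref{thm:bound_omega_lambda} feeds $\beta_\sigma(T)^{2+a}\leq\lambda_{p,\sigma}(T)^{2+a}$ into the blow-up criterion. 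As written, the key quantitative step is therefore missing; replacing the single-amplitude planar determinant by the conserved two-amplitude triple product is what closes it.
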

	
	Moreover, this quantity can be controlled by the norm of the semigroup  corresponding to the linearized Euler equation~\eqref{eq:linear_Euler} in weighted Lebesgue spaces.
	
	\begin{prop}\label{prop:bound_beta_axi}
		Let $T\in(0,T^*)$, $p\in[1,\infty)$, $\sigma\in\(-\,\frac{2}{p'},\frac{2}{p}\)$ and assume $u$ is an axisymmetric solution of~\eqref{eq:Euler} verifying Hypothesis~\eqref{hyp:regularity}. Then
		\begin{align*}
			\beta_{\sigma}(T)\leq \lambda_{p,\sigma}(T).
		\end{align*}
	\end{prop}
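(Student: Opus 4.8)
\textbf{Proof proposal for Proposition~\ref{prop:bound_beta_axi}.}
The plan is to realize an arbitrary bi-characteristic amplitude $b_t$ as the (rescaled) limit of genuine solutions of the linearized Euler equation~\eqref{eq:linear_Euler}, so that the supremum defining $\beta_\sigma(T)$ is dominated by the supremum defining $\lambda_{p,\sigma}(T)$. Concretely, fix $(x_0,b_0,\tilde\xi_0)$ admissible for $\beta_\sigma(T)$, i.e. $b_0\cdot\xi_0=0$ and $\n{b_0}=r_0^\sigma$, and build the WKB family $v^\eps(t,x)\simeq b(t,x)\,e^{iS(t,x)/\eps}$ with $(x_t,\xi_t,b_t)$ solving~\eqref{eq:ODE} and $S$ axisymmetric so that $\xi_\theta=0$ (this is exactly the constraint $\xi=\tilde\xi$ flagged in Section~2). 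The key analytic input, borrowed from~\cite{vishik_spectrum_1996} and~\cite{vasseur_blow-up_2019}, is that for $\eps$ small the true linearized solution with initial datum $v^\eps(0,\cdot)$ stays $o(1)$-close (in the relevant norm, uniformly on $[0,T]$) to the WKB ansatz $b(t,x)e^{iS(t,x)/\eps}$; here one must check that this approximation survives multiplication by the weight $r^{-\sigma}$ and holds in $L^p$ rather than $L^2$.

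The second ingredient is a concentration/localization argument. One chooses the initial amplitude $b(0,\cdot)$ to be a smooth cutoff supported in a ball of radius $\delta$ around $x_0$, equal to $b_0$ at $x_0$, and with phase gradient $\nabla S(0,\cdot)$ equal to $\tilde\xi_0$ at $x_0$, divergence-free-compatible (the constraint $b\cdot\xi=0$ propagates along~\eqref{eq:ODE}, so it suffices to impose it at $t=0$). As $\delta\to0$ the transport of the support along the flow $x_t$ keeps it near $\tilde x_t$, and by continuity of the ODE flow the ratio of weighted $L^p$ norms
\begin{align*}
\frac{\Nrm{r^{-\sigma}v^\eps(T,\cdot)}{L^p}}{\Nrm{r^{-\sigma}v^\eps(0,\cdot)}{L^p}}
\end{align*}
converges, after sending first $\eps\to0$ then $\delta\to0$, to $\n{r_T^{-\sigma}\,b_T}/\n{r_0^{-\sigma}\,b_0} = \n{r_T^{-\sigma}\,b_T}$ (using $\n{b_0}=r_0^\sigma$). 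Since each such ratio is bounded above by $\lambda_{p,\sigma}(T)$ by definition, taking the supremum over admissible $(x_0,b_0,\tilde\xi_0)$ yields $\beta_\sigma(T)\le\lambda_{p,\sigma}(T)$.

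I expect the main obstacle to be the weighted-norm bookkeeping in the WKB error estimate: one needs the residual of the ansatz in~\eqref{eq:linear_Euler} to be small \emph{after} weighting by $r^{-\sigma}$ and in $L^p$, which is why the hypothesis $\sigma\in(-\tfrac{2}{p'},\tfrac{2}{p})$ enters — it is exactly the range in which $r^{-\sigma}$ is locally $L^p$-integrable in the two toroidal dimensions near $r=0$ (the weight $r^{-\sigma}$ in 3D picks up the extra Jacobian factor $r$), so that localized bumps have finite, non-degenerate weighted norm and the localization limit above is legitimate. A secondary point to be careful about is that the construction must stay within $H^1_\text{axi}(\Omega)$ and respect the boundary condition $v\cdot n=0$ when $\partial\Omega\neq\emptyset$; since the blow-up is interior one simply takes $\delta$ small enough that the support of the perturbation, transported over $[0,T]$, stays away from $\partial\Omega$, which removes the boundary issue entirely.
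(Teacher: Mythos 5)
Your overall strategy is the paper's: realize $b_t$ as the leading amplitude of a localized WKB family $v^\eps\simeq \varphi\, b\, e^{iS/\eps}$ built along the bicharacteristics~\eqref{eq:ODE}, compare the true linearized solution with this ansatz, and send $\eps\to0$ and then the localization parameter to $0$. The localization scheme, the propagation of $b\cdot\xi=0$, the exact divergence-free correction (the paper takes $v_{\eps,\delta}=\eps\,\curl\!\big(\tfrac{b\times\xi}{|\xi|^2}\,\varphi\,e^{iS/\eps}\big)$, which you only gesture at), and the remark that the support can be kept away from $\partial\Omega$ and from the axis all match the paper's proof. The gap is in the step you yourself flag as ``to be checked'': that the $O(\eps)$ residual of the ansatz can be propagated through the linearized equation \emph{in the weighted $L^p$ norm}. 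This is precisely the new content of the proposition compared with the unweighted $L^2$ setting of~\cite{vasseur_blow-up_2019}, the paper isolates it as Lemma~\ref{lem:weighted_stability}, and nothing in your proposal supplies it. The obstruction is the pressure: $\nabla Q$ is recovered from a Neumann problem with data $2\,v\cdot\nabla u - f$, and one must bound $\Nrm{r^{-\sigma}\nabla Q}{L^p}$ by $\Nrm{r^{-\sigma}(2\,v\cdot\nabla u - f)}{L^p}$, i.e.\ one needs the relevant singular integral to be bounded on $L^p(r^{-\sigma p}\dd x)$. Without that weighted estimate the Gronwall/Duhamel control of $v_{\eps,\delta}-Z_\eps v$ does not close.

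This is also where your explanation of the hypothesis $\sigma\in(-\tfrac{2}{p'},\tfrac{2}{p})$ goes wrong. It is not needed so that ``localized bumps have finite, non-degenerate weighted norm'': the construction picks $r_0\neq 0$ and supports the cutoff in an annulus strictly away from $\{r=0\}$, where $r^{-\sigma}$ is smooth and bounded above and below, so the test data would have finite weighted norm for \emph{every} $\sigma\in\R$. The restriction on $\sigma$ comes from the nonlocal pressure term, which sees the weight globally: one needs $r^{-\sigma p}$ to be a Muckenhoupt $A_p$ weight, which for the distance to the axis in $\R^3$ is the two-sided condition $2(1-p)<\sigma p<2$, i.e.\ exactly $\sigma\in(-\tfrac{2}{p'},\tfrac{2}{p})$. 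Note that your integrability heuristic only reproduces the upper bound $\sigma<\tfrac{2}{p}$ (local integrability of $r^{-\sigma p}\,r\dd r$); the lower bound is the integrability of the dual weight, and both are needed for the $A_p$ condition. So the route is the right one, but the key lemma carrying the weighted $L^p$ stability must be proved, and the role of the hypothesis on $\sigma$ relocated to it.
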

	
	Combining these two propositions and the new blow-up criterion leads to the proof of the main Theorem.
	\begin{proof}[Proof of Theorem~\ref{thm:bound_omega_lambda}]
		Combining Proposition~\ref{prop:bound_omega_axi} and Proposition~\ref{prop:bound_beta_axi} with $a > 0$ or $a=0$ we deduce that
		\begin{align*}
			\Nrm{\frac{\tilde{\omega}(T,\cdot)}{r^a}}{L^\infty} &\leq \Nrm{\tilde{\omega}^\init}{L^\infty} \beta_{\sigma}(T)^{2+a} \leq \Nrm{\tilde{\omega}^\init}{L^\infty} \lambda_{p,\sigma}(T)^{2+a}
			\\
			\Nrm{\tilde{\omega}(T,\cdot)}{L^\infty} &\leq \Nrm{\tilde{\omega}^\init}{L^\infty} \beta_\sigma(T)^2 \leq \Nrm{\tilde{\omega}^\init}{L^\infty} \lambda_{p,\sigma}(T)^2.
		\end{align*}
		and we can then conclude by Remark~\ref{rmk:criterion_extended} with $a = b \in[\frac{1}{2},s-\frac{5}{2})$.
	\end{proof}

\section{Blow-up criterion}\label{sec:blow_criteria}

	We  use the same notation $u$ both for the function of the variable $x\in\R^3$ and for the function of the variable $\tilde{x} = (r,z) \in \tilde{\Omega}$. We will however use the notation $\nabla$ as the gradient with respect to $x$, while we will define $\tilde{\nabla} := (\dpr,\dpz)$. Remark however that for any axisymmetric scalar function $w(x) = w(r,z) \in \R$ it holds
	\begin{align*}
		\nabla w = \nabla \(w(r,z)\) = \dpr w\, e_r + \dpz w\, e_z = \tilde{\nabla} w.
	\end{align*}
	We will write $\tilde{u} = u_r\,e_r + u_z e_z$. Remark that since $u$ is axisymmetric, we have $\Div{u_\theta e_\theta} = \dpth u_\theta = 0$, therefore $\tilde{u}$ is also divergence free and with these notations
	\begin{align*}
		\tilde{\divg}(r\tilde{u}) = \tilde{\divg}(\tilde{u}) + \frac{u_r}{r} = \Div{\tilde{u}} = 0,
	\end{align*}
	Moreover, remarking that $u_r$ and $u_\theta$ are odd functions of $r$, if $u$ is continuous, we deduce that $u_r(0,z) = u_\theta(0,z) = 0$. If $u\in H^{\frac{5}{2}+\eps}\subset C^1$, then we have even better. Computing the gradient in cylindrical coordinates yields
	\begin{align}\label{eq:bound_cylindric_nabla}
		 \n{\dpr u_r}^2 + \n{\dpr u_\theta}^2 + \n{\frac{u_r}{r}}^2 +\n{\frac{u_\theta}{r}}^2 &\leq \n{\nabla u}^2 < \infty.
	\end{align}
	If $u\in H^{\frac{7}{2}+\eps} \subset C^2$, computing the Hessian matrix $\nabla^2 u$ in cylindrical coordinates yields
	\begin{align}\label{eq:bound_cylindric_nabla_2}
		 \n{\dpr^2u_r}^2 + \n{\dpr^2u_\theta}^2 + \n{\frac{\dpr u_r}{r}}^2 + \n{\frac{\dpr u_\theta}{r}}^2 + \n{\frac{u_r}{r^2}}^2 +\n{\frac{u_\theta}{r^2}}^2 &\leq \n{\nabla^2 u}^2 < \infty.
	\end{align}
	
	From these estimates we deduce the following bounds
	\begin{lem}\label{lem:reverse_criterion}
		Assume $u\in H^s$ with $s>\frac{5}{2}$ is axisymmetric and let $\omega = \curl u$. Then for any $(a,b)\in \lt[0,s-\frac{5}{2}\rt)^2$, there exists a constant $C>0$ such that
		\begin{align*}
			\Nrm{\frac{\omega_r}{r^a}}{L^\infty} \leq C \Nrm{u}{H^s} \text{ and } \Nrm{\frac{\omega_z}{r^b}}{L^\infty} \leq C \Nrm{u}{H^s}.
		\end{align*}
	\end{lem}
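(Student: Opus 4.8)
The plan is to bound the weighted components $\omega_r/r^a$ and $\omega_z/r^b$ of the toroidal vorticity pointwise by expressing them through the velocity field in cylindrical coordinates and then exploiting the smallness of $u_r, u_\theta$ near the axis $r=0$, where the weights $r^{-a}, r^{-b}$ are singular. Recall that in cylindrical coordinates the toroidal components of $\omega = \curl u$ are $\omega_r = -\dpz u_\theta$ and $\omega_z = \dpr u_\theta + \frac{u_\theta}{r}$. The key structural fact, already recorded in~\eqref{eq:bound_cylindric_nabla} and~\eqref{eq:bound_cylindric_nabla_2}, is that $u_\theta$ is an odd function of $r$ vanishing at $r=0$, and that $u \in H^s \subset C^{\lfloor s - 3/2\rfloor}$ with $s > 5/2$ controls $\dpr u_\theta$, $u_\theta/r$ and, if $s$ is large enough, also $\dpr^2 u_\theta$, $\dpr u_\theta/r$, $u_\theta/r^2$ in $L^\infty$.

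First I would treat $\omega_z = \dpr u_\theta + \frac{u_\theta}{r}$. Away from the axis, say on $\{r \geq 1\}$, the bound $\Nrm{r^{-b}\omega_z}{L^\infty(r\geq 1)} \leq \Nrm{\omega_z}{L^\infty} \leq C\Nrm{u}{H^s}$ is immediate from Sobolev embedding. On $\{r \leq 1\}$ one writes, using that $u_\theta(0,z)=0$ and Taylor expansion in $r$, the identity $\frac{u_\theta}{r} = \int_0^1 \dpr u_\theta(\tau r, z)\,\d\tau$, and similarly $\dpr u_\theta(r,z) = \dpr u_\theta(r,z) - \dpr u_\theta(0,z)$ since $\dpr u_\theta$ is even and hence its behavior... — more precisely, since $u_\theta$ is odd, $\omega_z = \dpr u_\theta + u_\theta/r$ is even in $r$ and vanishes at $r=0$ to the order dictated by the regularity: for $s$ slightly above $5/2$ one gets a modulus-of-continuity bound, and for $b < s - 5/2$ one obtains $\n{\omega_z(r,z)} \leq C\Nrm{u}{H^s}\, r^b$ on $r\leq 1$ by interpolating the Hölder seminorm of $\dpr u_\theta$ and $u_\theta/r$ (themselves controlled by~\eqref{eq:bound_cylindric_nabla} and the higher estimates~\eqref{eq:bound_cylindric_nabla_2}) against their $L^\infty$ bounds. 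Dividing by $r^b$ then yields the claim. The treatment of $\omega_r = -\dpz u_\theta$ is analogous: $\dpz u_\theta$ is odd in $r$ (since $u_\theta$ is), so $\dpz u_\theta(0,z)=0$ and $\n{\dpz u_\theta(r,z)} \leq C \Nrm{u}{H^s} r^a$ for $a < s - 5/2$ by the same interpolation argument applied to $\dpz u_\theta$, using that $\dpr\dpz u_\theta$ and $\dpz u_\theta / r$ are controlled by $\Nrm{\nabla^2 u}{L^\infty} \lesssim \Nrm{u}{H^s}$ when $s > 7/2$, and by a fractional variant of~\eqref{eq:bound_cylindric_nabla} for $5/2 < s \leq 7/2$.

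The main obstacle is making the near-axis vanishing estimate $\n{f(r,z)} \leq C\Nrm{u}{H^s}\,r^{\gamma}$ rigorous for the full range $\gamma \in [0, s-5/2)$ and for $s$ only slightly larger than $5/2$, where $u$ is merely $C^1$ (or $C^{1,\alpha}$) and the second-derivative estimate~\eqref{eq:bound_cylindric_nabla_2} is not available. The clean way to handle this uniformly is to work with the anisotropic / fractional Sobolev structure directly: the function $g := u_\theta/r$ belongs to $H^{s-1}$ (this uses Hardy-type inequalities in the $r$ variable together with the oddness of $u_\theta$), hence $g \in C^{s - 1 - 1}= C^{s-2}$, and $\dpr u_\theta, \dpz u_\theta \in H^{s-1} \subset C^{s-2}$ as well; since these functions vanish on $\{r=0\}$ and lie in a Hölder class of exponent $> a$ (resp. $> b$) whenever $a, b < s - 5/2$, vanishing on the hyperplane forces the $r^\gamma$ decay with $\gamma$ up to that Hölder exponent. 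Carrying out this Hardy-inequality step and the bookkeeping of which fractional Sobolev/Hölder spaces are needed for the borderline regularity is the technical heart of the lemma; everything else is routine Sobolev embedding and elementary calculus on $\{r \geq 1\}$.
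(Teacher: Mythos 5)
Your treatment of $\omega_r=-\dpz u_\theta$ in the range $a\le 1$ is sound and is in substance the paper's own argument: $u_\theta$ is odd in $r$, so $\dpz u_\theta$ vanishes on the axis, it lies in $H^{s-1}\subset C^{0,s-\frac{5}{2}}$ (note the embedding loses $3/2$ in three dimensions, not $1$ as in your ``$C^{s-2}$''), and the H\"older estimate against the hyperplane $\{r=0\}$ gives the $r^a$ decay, the region $r\ge 1$ being handled by the plain $L^\infty$ bound. The paper packages the same idea as the pointwise cylindrical inequalities~\eqref{eq:bound_cylindric_nabla}--\eqref{eq:bound_cylindric_nabla_2} followed by interpolation.

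The step that fails is the assertion that $\omega_z=\dpr u_\theta+\frac{u_\theta}{r}$ vanishes at $r=0$. It does not: the generic near-axis expansion of the odd function $u_\theta$ is $u_\theta(r,z)=c_1(z)\,r+O(r^3)$, whence $\omega_z(0,z)=2\,\dpr u_\theta(0,z)=2c_1(z)$, which is nonzero for, say, a smooth compactly supported axisymmetric field equal to $r\,e_\theta$ near the axis. Hence no estimate $\n{\omega_z}\le C\,r^b$ with $b>0$ is available, and the entire $\omega_z/r^b$ half of your argument collapses (your sentence visibly trails off at exactly this point before the vanishing is asserted without proof). This is not a gap you could have closed: the conclusion for $b>0$ is false for general $u\in H^s$, and the paper's own proof conceals the same defect inside Inequality~\eqref{eq:bound_cylindric_nabla_2}, whose terms $\n{u_\theta/r^2}$ and $\n{\dpr u_\theta/r}$ are not controlled by $\n{\nabla^2 u}$ (only the combination $\n{\partial_r(u_\theta/r)}$ is; the field $u=r\,e_\theta$ has $\nabla^2u=0$ but $u_\theta/r^2=1/r$). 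A second, smaller gap: your principle that vanishing on $\{r=0\}$ plus membership in a H\"older class of exponent $\gamma$ forces $r^\gamma$ decay is valid only for $\gamma\le1$; for $a\in\lt(1,s-\frac{5}{2}\rt)$ a $C^{1,\alpha}$ function vanishing on the axis decays only like $r$ in general (here $\dpz u_\theta\sim c_1'(z)\,r$), so the claimed bound on $\omega_r/r^a$ also fails in that range.
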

	
	\begin{proof}
		As a consequence of the Sobolev embedding $H^{\frac{5}{2}+\eps}\subset C^1$, $H^{\frac{7}{2}+\eps} \subset C^2$, the fact that $\dpz u_\theta = -\omega_r$ and  and the above inequalities~\eqref{eq:bound_cylindric_nabla} and~\eqref{eq:bound_cylindric_nabla_2}, we obtain for any $\eps>0$ the existence of a constant $C>0$ such that
		\begin{align*}
			\Nrm{\omega_r}{L^\infty} \leq C \Nrm{u}{H^{\frac{5}{2}+\eps}} \text{ and } \Nrm{\frac{\omega_r}{r}}{L^\infty} \leq C \Nrm{u}{H^{\frac{7}{2}+\eps}},
		\end{align*}
		and we conclude by interpolation. Since $\omega_z = \dpr u_\theta + \frac{u_\theta}{r}$, the same reasoning implies the result for $\omega_z$. When $s>7/2$, one can use higher derivatives. Remark that the result also follows from Hardy's and Sobolev's inequalities.
	\end{proof}

	\begin{lem}\label{lem:bound_om}
		Let $p\geq 1$, and $(a,b)\in\R^2$ verifying $\(a+b-1\)\(3-a\)\geq 0$ and $b\neq \frac{2}{p}-1$. Assume $u$ is an axisymmetric solution of~\eqref{eq:Euler}  verifying Hypothesis~\eqref{hyp:regularity} with initial condition $u^\init$ such that $r\,u^\init\in L^{p\(1-\vartheta\)}$ with $\vartheta = \frac{3-a}{b+2}$. Then for any $T\in(0,T^*)$, it holds
		\begin{align*}
			\Nrm{\frac{\omega_\theta(T)}{r}}{L^p} \leq \Nrm{\frac{\omega_\theta^\init}{r}}{L^p} + C \int_0^T \Nrm{\frac{\omega_r}{r^a}}{L^\infty}\Nrm{\frac{\omega_z}{r^b}}{L^\infty}^{\vartheta} \d t,
		\end{align*}
		where $C = \frac{2}{|b+1-d/p|} \Nrm{r\,u^\init}{L^{p\(1-\vartheta\)}}^{1-\vartheta}$.
	\end{lem}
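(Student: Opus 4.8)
The plan is to work from the evolution equation for the ``modified'' poloidal vorticity $\omega_\theta/r$ along the flow. Recall from the reduced axisymmetric system recalled in the introduction that
\[
	\dpt\!\(\frac{\omega_\theta}{r}\) + u\cdot\nabla\!\(\frac{\omega_\theta}{r}\) = \frac{\dpz u_\theta^2}{r^2}.
\]
Since $u$ is divergence free, the transport operator $\dpt + u\cdot\nabla$ preserves all $L^p$ norms, so integrating along characteristics and applying Minkowski's integral inequality gives
\[
	\Nrm{\frac{\omega_\theta(T)}{r}}{L^p} \leq \Nrm{\frac{\omega_\theta^\init}{r}}{L^p} + \int_0^T \Nrm{\frac{\dpz u_\theta^2}{r^2}}{L^p}\d t.
\]
So the whole content of the lemma is the pointwise/$L^p$ estimate on the source term $\dpz(u_\theta^2)/r^2 = 2\,u_\theta\,\dpz u_\theta/r^2$.

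First I would rewrite $\dpz u_\theta = -\omega_r$ (as used in Lemma~\ref{lem:reverse_criterion}), so the source term is $-2\,u_\theta\,\omega_r/r^2$. Next, I want to split the two powers of $r$ needed in the denominator between the factors $\omega_r$ and $u_\theta$, producing $\omega_r/r^a$ with the exponent $a$ appearing in the statement. The remaining factor is then $u_\theta/r^{2-a}$, which I would like to bound by interpolating between $r\,u_\theta$ (controlled in $L^{p(1-\vartheta)}$ from the initial data, since $r\,u_\theta$ is transported, i.e.\ $\dpt(ru_\theta)+u\cdot\nabla(ru_\theta)=0$ preserves all $L^q$ norms) and the other component of the toroidal vorticity. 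Indeed, since $\omega_z = \dpr u_\theta + u_\theta/r = \tilde\divg(r\,\tilde?)\ldots$ — more precisely $r\,\omega_z = \dpr(r u_\theta)$, so $u_\theta/r^{b}$ is essentially a derivative of $ru_\theta$ and can be related to $\omega_z/r^b$ via a Hardy-type inequality in the $r$ variable. The exponents are designed precisely so that
\[
	\frac{u_\theta}{r^{2-a}} \text{ interpolates between } \frac{\omega_z}{r^b}\in L^\infty \text{ and } r\,u_\theta \in L^{p(1-\vartheta)},
\]
with the interpolation weight $\vartheta = \frac{3-a}{b+2}$ giving the claimed exponent $\vartheta$ on the $L^\infty$ norm of $\omega_z/r^b$ and the exponent $1-\vartheta$ on $\Nrm{r\,u^\init}{L^{p(1-\vartheta)}}$. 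The constant $\frac{2}{|b+1-d/p|}$ should emerge from the Hardy inequality $\Nrm{f/r^{\gamma}}{L^p(\tilde\Omega)}\lesssim \frac{1}{|\gamma - \text{something}|}\Nrm{\dpr f / r^{\gamma-1}}{L^p}$ applied to $f = r u_\theta$ (here the dimension $d$ enters through the $r\dd r$ weight in the $L^p(\Omega)$ norm when written in toroidal coordinates); the condition $b\neq \frac{2}{p}-1$ is exactly the non-degeneracy of the Hardy constant, and $(a+b-1)(3-a)\geq 0$ together with $\vartheta\in[0,1]$ ensures the interpolation exponents are admissible (all nonnegative and summing correctly).

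Concretely the steps are: (1) integrate the $\omega_\theta/r$ equation along characteristics and reduce to estimating $\Nrm{u_\theta\,\omega_r/r^2}{L^p}$; (2) use $r\,\omega_z = \dpr(r u_\theta)$ plus a weighted Hardy inequality to get $\Nrm{u_\theta/r^{2-a-\text{(piece)}}}{L^{q_1}}$-type control in terms of $\omega_z/r^b$ and $r u_\theta$; (3) run the Hölder/interpolation bookkeeping with exponents $p$, $\infty$ (twice, for $\omega_r/r^a$ and $\omega_z/r^b$) and $p(1-\vartheta)$ (for $r u_\theta$), checking $\frac1p = \frac{0}{} + \frac{\vartheta\cdot 0}{} + \frac{1-\vartheta}{p(1-\vartheta)}$ and the $r$-power balance; (4) transport the $L^{p(1-\vartheta)}$ norm of $r u_\theta$ back to time $0$ to replace it by $\Nrm{r u^\init}{L^{p(1-\vartheta)}}$, yielding the stated constant $C$. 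The main obstacle I expect is step (2)–(3): getting the precise weighted Hardy inequality on the half-space $r>0$ with the correct constant $\frac{2}{|b+1-d/p|}$ and verifying that the algebraic constraints $\vartheta=\frac{3-a}{b+2}$, $(a+b-1)(3-a)\geq0$, $b\neq\frac2p-1$ are exactly what make all the Hölder exponents land in the valid range and the powers of $r$ cancel — this is a somewhat delicate but ultimately routine bookkeeping computation, and the sign condition $(a+b-1)(3-a)\geq 0$ hints that one may need to handle the cases $a\leq 3$, $a+b\geq 1$ and $a\geq 3$, $a+b\leq 1$ separately.
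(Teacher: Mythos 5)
Your plan is correct and follows essentially the same route as the paper's proof: the transport equation $\dpt(\omega_\theta/r)+u\cdot\nabla(\omega_\theta/r)=-2\,\omega_r u_\theta/r^2$, conservation of $\Nrm{ru_\theta}{L^q}$ along the flow, the Hardy inequality $\Nrm{u_\theta/r^{b+1}}{L^\infty}\lesssim\Nrm{\omega_z/r^b}{L^\infty}$ obtained from $\dpr(ru_\theta)=r\omega_z$, and the interpolation $\frac{u_\theta}{r^{2-a}}=\bigl(\frac{u_\theta}{r^{b+1}}\bigr)^\vartheta(ru_\theta)^{1-\vartheta}$ with $\vartheta=\frac{3-a}{b+2}$. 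The bookkeeping you defer is exactly the computation the paper carries out, so no gap.
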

	
	\begin{remark}\label{rmk:criterion_bounded_domen}
		In the case when $\Omega$ is a bounded domain and $\frac{\omega_r}{r^a}$ and $\frac{\omega_z}{r^b}$ are in $L^{1+\vartheta}((0,T),L^\infty)$, we deduce in particular that
		\begin{align*}
			\int_0^T \Nrm{\omega}{L^\infty} \d t < C
		\end{align*}
		for some constant $C>0$ depending on the size of the domain.
	\end{remark}
	
	\begin{proof}
		Remarking that the pressure $P$ does not depend on $\theta$ since $u$ is axisymmetric and taking the $\theta$ coordinate of the first equation in~\eqref{eq:Euler} and of Equation~\eqref{eq:vortex} yields
		\begin{align*}
			\dpt u_\theta + u_r\dpr u_\theta + u_z \dpz u_\theta + u_\theta \frac{u_r}{r} &= 0
			\\
			\dpt \omega_\theta + u_r\dpr \omega_\theta + u_z\dpz\omega_\theta + \omega_r \frac{u_\theta}{r} &= \omega_r\dpr u_\theta + \omega_z\dpz u_\theta + \omega_\theta \frac{u_r}{r},
		\end{align*}
		which can be rewritten
		\begin{align*}
			\dpt u_\theta + \frac{u_r}{r}\dpr(ru_\theta) + u_z \dpz u_\theta &= 0
			\\
			\dpt \omega_\theta + r u_r\dpr\(\frac{\omega_\theta}{r}\) + u_z\dpz\omega_\theta &= - 2\,\frac{\omega_r\,u_\theta}{r},
		\end{align*}
		where we used the fact that $\dpz u_\theta = -\omega_r$ and $\dpr u_\theta = \omega_z - \frac{u_\theta}{r}$. Therefore, defining $\tilde{\omega} = (\omega_r,\omega_z)$, multiplying the first equation by $r$ and dividing the second equation by $r$ we get
		\begin{align}\label{eq:nu}
			\dpt(ru_\theta) + u\cdot\nabla(ru_\theta) &= 0
			\\\label{eq:om}
			\dpt\!\(\frac{\omega_\theta}{r}\) + u\cdot\nabla\!\(\frac{\omega_\theta}{r}\) &=  - 2\,\frac{\omega_r\,u_\theta}{r^2}.
		\end{align}
		We immediately deduce from Equation~\eqref{eq:nu} that for any $q\in[1,\infty]$,
		\begin{equation*}
			\Nrm{ru_\theta}{L^q} = \Nrm{ru_\theta^\init}{L^q}.
		\end{equation*}
		Since $u_\theta(0,z)=0$, we can use Hardy's inequality (see e.g. \cite[Equation~(1.3.3)]{mazya_sobolev_2011}) which tells that for any $p\in(1,\infty)$ and $\sigma \neq \frac{d}{p}$ and function $\nu : \R^d\to \R$ with the additional assumption that $\nu(0)=0$ if $\sigma>\frac{d}{p}$, it holds
		\begin{align*}
			\Nrm{\frac{\nu(y)}{\n{y}^{\sigma}}}{L^p(\R^d)} \leq \frac{1}{\n{\sigma-d/p}} \Nrm{\frac{\nabla \nu(y)}{\n{y}^{\sigma-1}}}{L^p(\R^d)}.
		\end{align*}
		In particular, taking $\nu(r) = ru_\theta(r,z)$, remarking that $\dpr(ru_\theta) = r\omega_z$, and then taking the $L^p$ norm with respect to $z$, we obtain for any $p\in[1,\infty]$ (by passing to the limit to get $p=\infty$ and $p=1$) and any $b\neq \frac{2}{p}-1$,
		\begin{equation}\label{eq:Hardy}
			\Nrm{\frac{u_\theta}{r^{b+1}}}{L^p} \leq C_{d,b,p} \Nrm{\frac{\omega_z}{r^{b}}}{L^p}.
		\end{equation}
		Therefore, from Equation~\eqref{eq:om}, we get for any $\vartheta\in[0,1]$
		\begin{align*}
			\dt \Nrm{\frac{\omega_\theta}{r}}{L^p} &\leq 2 \Nrm{\frac{\omega_r}{r^a}}{L^\infty} \Nrm{\frac{u_\theta}{r^{2-a}}}{L^p}
			\\
			&\leq 2 \Nrm{\frac{\omega_r}{r^a}}{L^\infty} \Nrm{\frac{u_\theta^{\vartheta}}{r^{3-a-\vartheta}}}{L^\infty} \Nrm{r\,u_\theta}{L^{p\(1-\vartheta\)}}^{1-\vartheta}
			\\
			& \leq \tfrac{2}{|b+1-2/p|} \Nrm{\frac{\omega_r}{r^a}}{L^\infty} \Nrm{\frac{\omega_z}{r^b}}{L^\infty}^{\vartheta}\Nrm{r\,u_\theta^\init}{L^{p\(1-\vartheta\)}}^{1-\vartheta},
		\end{align*}
		with $b = \frac{3-a-2\,\vartheta}{\vartheta}$. This implies the result by Gronwall's inequality.
	\end{proof}
	
	When $\Omega = \R^3$, we still have to get estimates on the $L^\infty$ norm of $\omega$ for large values of $r$.
	\begin{lem}\label{lem:criterion_whole_space}
		Let $T>0$, $\Omega :=\R^3$ and $(a,b)\in\R^2$ verifying either $b> -1$ and $a\in[1-b,2]$ or $3\leq a\leq -b$. Assume $u$ is an axisymmetric solution of~\eqref{eq:Euler}  verifying Hypothesis~\eqref{hyp:regularity} with initial condition $u^\init$ such that $r\,u^\init\in L^\infty\cap L^q$ with $q<3\,\frac{b+a}{b+2}$ and
		\begin{align*}
			\frac{\omega_r}{r^a}\in L^{1+\vartheta}((0,T), L^\infty) \text{ and } \frac{\omega_z}{r^b} \in L^{1+\vartheta}((0,T), L^\infty),
		\end{align*}
		with $\vartheta = \max(\tfrac{2-a}{b+2},\tfrac{3-a}{b+2}) \in [0,1]$. Then it holds
		\begin{align*}
			\omega_\theta\in L^\infty((0,T)\times\R^d).
		\end{align*}
	\end{lem}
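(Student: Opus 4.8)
The plan is to run a single transport estimate for $\omega_\theta$ along the characteristics of $u$, reducing the whole statement to an $L^1$-in-time bound on the source of its equation after peeling off a bounded Gronwall factor. First I would record the structure already exploited in the proof of Lemma~\ref{lem:bound_om}: along a flow line $x_t$ of $u$ one has $r_t\,u_\theta(t,x_t)=r_0\,u_\theta^\init(x_0)$, because $ru_\theta$ is transported; the Hardy estimate in the form $\Nrm{u_\theta(s)/r^{b+1}}{L^\infty}\leq C\,\Nrm{\omega_z(s)/r^b}{L^\infty}$, valid since $u_\theta(s,0,z)=0$ and $b\neq-2$; and the equation $\dpt\omega_\theta+u\cdot\nabla\omega_\theta=\tfrac{u_r}{r}\,\omega_\theta-2\,\tfrac{\omega_r u_\theta}{r}$. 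From Hypothesis~\eqref{hyp:regularity} I get $\omega_\theta^\init\in L^\infty$ (by $H^{s-1}\subset L^\infty$) and, uniformly on $[0,T]$, bounds on $\Nrm{u(t)}{L^\infty}$, $\Nrm{\nabla u(t)}{L^\infty}$ and — since $u_r(0,z)=0$ — on $\Nrm{u_r(t)/r}{L^\infty}$, so that $E(t):=\exp\!\big(\int_0^t\tfrac{u_r(s,x_s)}{r_s}\,\d s\big)$ is bounded above and below on $[0,T]$.

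Next, integrating the equation along $x_\cdot$ and applying Duhamel gives
\[
	\omega_\theta(t,x_t)=E(t)\,\omega_\theta^\init(x_0)-2\,E(t)\int_0^t E(s)^{-1}\,\frac{\omega_r(s,x_s)\,u_\theta(s,x_s)}{r_s}\,\d s ,
\]
so it is enough to bound $\int_0^t\tfrac{|\omega_r(s,x_s)|\,|u_\theta(s,x_s)|}{r_s}\,\d s$ uniformly in $x_0$. I would split $\tfrac{|\omega_r u_\theta|}{r}=\tfrac{|\omega_r|}{r^a}\cdot\tfrac{|u_\theta|}{r^{1-a}}$ and estimate $|u_\theta(s,x_s)|$ by interpolating its two available controls: the pointwise consequence of the Hardy estimate, $|u_\theta(s,x_s)|\leq C\,r_s^{\,b+1}\Nrm{\omega_z(s)/r^b}{L^\infty}$, raised to a power $\vartheta$, and the conservation identity $|u_\theta(s,x_s)|=|r_0u_\theta^\init(x_0)|/r_s\leq\Nrm{r\,u^\init}{L^\infty}/r_s$ raised to the power $1-\vartheta$. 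The power of $r_s$ that appears is $\vartheta(b+2)-1$; choosing $\vartheta=\tfrac{2-a}{b+2}$ makes it equal to $1-a$, which cancels against the factor $r_s^{\,a}$ coming from $|\omega_r(s,x_s)|\leq r_s^{\,a}\Nrm{\omega_r(s)/r^a}{L^\infty}$ and the explicit $r_s^{-1}$. One is then left with $\tfrac{|\omega_r(s,x_s)|\,|u_\theta(s,x_s)|}{r_s}\leq C\Nrm{r\,u^\init}{L^\infty}^{1-\vartheta}\Nrm{\omega_r(s)/r^a}{L^\infty}\Nrm{\omega_z(s)/r^b}{L^\infty}^{\vartheta}$ with no surviving power of $r_s$, and the time integral is finite by Hölder with exponents $\tfrac{1+\vartheta}{1}$ and $\tfrac{1+\vartheta}{\vartheta}$ together with the hypotheses $\omega_r/r^a,\omega_z/r^b\in L^{1+\vartheta}((0,T),L^\infty)$.

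The one genuinely delicate point — and what I expect to be the main obstacle — is the validity of the Hardy estimate when $b<-2$, i.e.\ in the regime $3\leq a\leq-b$: there the weight $b+2$ is negative, so $\Nrm{u_\theta/r^{b+1}}{L^\infty}\lesssim\Nrm{\omega_z/r^b}{L^\infty}$ requires $r\,u_\theta$ to decay at infinity, and this is exactly what the assumption $r\,u^\init\in L^q$ with $q<3\,\tfrac{b+a}{b+2}$ delivers, through conservation of $r\,u_\theta$ in $L^q$; the precise threshold on $q$ is what keeps the exponent arithmetic consistent. A companion contribution — arising when one also tracks $\omega_\theta/r$, whose equation has the source $-2\,\omega_r u_\theta/r^2$ and hence the shifted exponent $\tfrac{3-a}{b+2}$ — is the reason the integrability hypothesis in the statement is phrased with $\vartheta=\max\!\big(\tfrac{2-a}{b+2},\tfrac{3-a}{b+2}\big)$; in each regime one only needs $L^{1+\vartheta'}((0,T),L^\infty)$ control for some $\vartheta'\leq\vartheta$, and since $(0,T)$ has finite measure the single hypothesis of the statement is enough. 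Combining the bounded Gronwall factor $E$ with the uniform bound on the source integral then gives $\Nrm{\omega_\theta(t)}{L^\infty(\R^3)}\leq C$ for every $t<T$, i.e.\ $\omega_\theta\in L^\infty((0,T)\times\R^3)$; I would expect the real labour to be the power-of-$r$ bookkeeping and the justification of the decay of $r\,u_\theta$, both needed to reach the sharp ranges of $(a,b,q)$.
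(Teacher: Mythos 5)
There is a genuine gap, and it sits at the very first step: you claim that Hypothesis~\eqref{hyp:regularity} gives, \emph{uniformly on $[0,T]$}, bounds on $\Nrm{\nabla u(t)}{L^\infty}$ and hence on $\Nrm{u_r(t)/r}{L^\infty}$, so that your Gronwall factor $E(t)=\exp\big(\int_0^t u_r(s,x_s)/r_s\,\d s\big)$ is bounded. But~\eqref{hyp:regularity} only asserts $u\in C^0([0,T),H^s)$ on the \emph{half-open} interval: in the blow-up scenario this lemma is designed for, $\Nrm{u(t)}{H^s}$ (hence $\Nrm{\nabla u(t)}{L^\infty}$) may diverge as $t\to T^-$. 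If $\nabla u$ were uniformly bounded up to $T$ the conclusion $\omega_\theta\in L^\infty$ would be immediate from $|\omega|\leq C|\nabla u|$ and there would be nothing to prove; your argument is therefore circular exactly where the difficulty lies. No bound on $\int_0^T\Nrm{u_r/r}{L^\infty}\,\d t$ is available from the hypotheses, so the factor $E(t)$ cannot be controlled and the Duhamel formula does not close.

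The paper's proof avoids this by factoring the troublesome source term the other way: it writes $\Nrm{\omega_\theta u_r/r}{L^p}\leq\Nrm{\omega_\theta/r}{L^p}\Nrm{u_r}{L^\infty}$, where $\Nrm{\omega_\theta/r}{L^p}$ is already controlled on $(0,T)$ by Lemma~\ref{lem:bound_om}, and then bounds $\Nrm{u_r}{L^\infty}$ \emph{without} invoking $\nabla u$, via the Biot--Savart law: $u_r=-\dpz K*\omega_\theta$ with $\dpz K\in L^{3/2,\infty}$, hence $\Nrm{u_r}{L^\infty}\lesssim\Nrm{\omega_\theta}{L^{3,1}}\lesssim\Nrm{\omega_\theta}{L^{p_0}\cap L^{p_1}}$ for $p_0\in(2,3)$, $p_1>3$. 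Running the $L^{p_0}$ and $L^{p_1}$ versions of the estimate first and applying Gronwall yields $u_r\in L^\infty((0,T)\times\R^3)$, and only then is the $p=\infty$ estimate closed. This bootstrap is also where the hypothesis $r\,u^\init\in L^q$ with $q<3\,\frac{b+a}{b+2}$ is actually used (it guarantees $r\,u^\init\in L^{p_0(1-\vartheta_2)}$ for some $p_0<3$), not, as you conjectured, as a decay condition validating Hardy's inequality for $b<-2$. Your treatment of the second source term $\omega_r u_\theta/r$ (the interpolation of $u_\theta$ between the Hardy bound and the conservation of $r u_\theta$, with $\vartheta_2=\frac{2-a}{b+2}$ killing the power of $r$) does match the paper; the missing idea is the Biot--Savart/Lorentz-space bootstrap for $u_r$.
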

	
	\begin{proof}
		We come back to Equation~\eqref{eq:om} to obtain that the equation for $\omega_\theta$ can be written
		\begin{align*}
			\dpt \omega_\theta + u\cdot\nabla\omega_\theta &= \frac{\omega_\theta\, u_r}{r} - 2\,\frac{\omega_r\,u_\theta}{r}.
		\end{align*}
		Hence, similarly as in the proof of Lemma~\ref{lem:bound_om} and using our new bound on $\frac{\omega_\theta}{r}$, we can control the $L^p$ norm in the following way
		\begin{align}\nonumber
			\dt\Nrm{\omega_\theta}{L^p} &\leq  \Nrm{\frac{\omega_\theta}{r}}{{L^p}} \Nrm{u_r}{{L^\infty}} + 2\Nrm{\frac{\omega_r}{r^a}}{{L^\infty}} \Nrm{\frac{u_\theta}{r^{1-a}}}{{L^p}}
			\\\label{eq:om_th_Lp}
			&\leq \Nrm{\frac{\omega_\theta}{r}}{{L^p}} \Nrm{u_r}{{L^\infty}} + C\Nrm{\frac{\omega_r}{r^a}}{{L^\infty}} \Nrm{\frac{\omega_z}{r^b}}{{L^\infty}}^{\vartheta_2} \Nrm{r u_\theta^\init}{{L^{p\(1-\vartheta_2\)}}}^{1-\vartheta_2},
		\end{align}
		with $\vartheta_2 = \frac{2-a}{b+2} \in [0,1]$. To bound $u_r$, we use the Biot and Savart law which gives us
		\begin{align*}
			u(x) = \curl\int_{\R^3}K(x-y)\,\omega(y)\dd y = \curl K*\omega,
		\end{align*}
		where $K(x) = \frac{1}{4\pi\n{x}}$ is the Newtonian kernel. Taking the scalar product with $e_r$ and using the fact that since we are in an axisymmetric setting $\partial_\theta\omega_z = 0$, we obtain
		\begin{align*}
			u_r = -\dpz K*\omega_\theta,
		\end{align*}
		and since $\dpz K \in L^{3/2,\infty}$, we deduce
		\begin{align*}
			\Nrm{u_r}{L^\infty} &\leq C_K\Nrm{\omega_\theta}{L^{3,1}} \leq C\Nrm{\omega_\theta}{L^{3-\eps}\cap L^{3+\eps}}.
		\end{align*}
		Therefore, by taking $p=p_0\in(2,3)$ and $p=p_1>3$ in Inequality~\eqref{eq:om_th_Lp}, by Gronwall's lemma we deduce that $\omega_\theta \in L^\infty((0,T),L^{p_0}\cap L^{p_1})$ and
		\begin{align*}
			\Nrm{u_r}{L^\infty} &\leq C\Nrm{\omega_\theta}{L^{p_0}\cap L^{p_1}} \leq C_{T,\tilde{\omega}},
		\end{align*}
		where $C_{T,\tilde{\omega}}$ depends only on $T$, on the initial conditions and on the bounds on $\Nrm{\frac{\omega_r}{r^a}}{{L^\infty}}$ and $ \Nrm{\frac{\omega_z}{r^b}}{{L^\infty}}$. Now that we know that $u_r\in L^\infty((0,T)\times\R^3)$, we can take $p=\infty$ in Inequality~\eqref{eq:om_th_Lp} and then conclude by Gronwall's inequality.
	\end{proof}

\section{Proof of the Linear instability}

\subsection{Control of the vorticity by \texorpdfstring{$\beta$}{beta}}

	\begin{proof}[Proof of Proposition~\ref{prop:bound_omega_axi}]
		Fix $(T,x_T)\in(0,T^*)\times\Omega\backslash\{r=0\}$ and define backward the solution $x_t$ of the first equation in the system~\eqref{eq:ODE} for $t\in[0,T]$, and $\omega_t = \omega(t,x_t)$. Taking $\xi'_T$ as the unit vector such that $(\xi'_T)_\theta = 0$ and $\tilde{\xi}'_T\cdot\tilde{\omega}_T = \n{\tilde{\omega}}_T$, since $\xi'$ solves the backward dual vorticity equation, it holds $\xi'_0\cdot\omega_0 = \xi'_T\cdot\omega_T$ (see~\cite[Equation~(8)]{vasseur_blow-up_2019}). Since $\xi'_t$ remains axisymmetric, $(\xi'_0)_\theta = 0$, so that
		\begin{equation*}
			\tilde{\xi}'_0\cdot\tilde{\omega}_0 = \xi'_0\cdot\omega_0 = \xi'_T\cdot\omega_T = \tilde{\xi}'_T \cdot\tilde{\omega}_T = \n{\tilde{\omega}_T}.
		\end{equation*}
		Thus, with the notation $r_t = r_t(x_0) := (x_t)_r$, we get the following inequality
		\begin{equation}\label{eq:xi_0_vs_omega}
			r_T^{-a} \n{\tilde{\omega}_T} = r_T^{-a} \(\tilde{\xi}'_0\cdot\tilde{\omega}_0\) \leq r_0^{a}\,r_T^{-a} \n{\xi'_0}\Nrm{r^{-a}\omega(0,\cdot)}{L^\infty}.
		\end{equation}
		Now consider vectors $(b'_0,b''_0,\xi'_0)\in (\R^3)^2\times\SS^1$ such that $b'_0$, $b''_0$ and $\xi'_0$ are orthogonal to each others, $\n{b'_0} = \n{b''_0} = r_0^{\sigma}$ and $r_0^{-2\sigma}\(b'_0\times b''_0\)\cdot\xi'_0 = \n{\xi'_0}$, and let $b'_t$ and $b''_t$ be the solutions of~\eqref{eq:ODE} corresponding to the same $\xi'_t$ defined before. Then by~\cite[Equation~(10)]{vasseur_blow-up_2019}, we get
		\begin{equation}\label{eq:xi_0_vs_b}
			r_0^{2\sigma}\n{\xi'_0} = \(b'_0\times b''_0\)\cdot\xi'_0 = \(b'_T\times b''_T\)\cdot\xi'_T \leq \n{b'_T}\n{b''_T}.
		\end{equation}
		We can also consider an other solution $\xi_t = \tilde{\xi}_t$ of equation~\eqref{eq:ODE} such that $\xi_0\cdot\omega_0 = 0$. Once again, since this scalar product is conserved, for any $t\in[0,T]$ we still have $\xi_t\cdot\omega_t = 0$. Now define $b_t$ as a solution of the third equation in~\eqref{eq:ODE} such that $b_0 = r_0^\sigma e_\theta$ and the incompressibility condition $\xi\cdot b = 0$ is satisfied. For such a solution $b$, since $\xi_\theta=0$ we have
		\begin{align*}
			\dpt \!\(r b_\theta\)_t &= (u_r b_\theta)_t + r_t\(-b\cdot\nabla u\cdot e_\theta -\frac{u_\theta}{r}\,b_r \)_t = - \(r b\cdot\nabla u_\theta\)_t -\(u_\theta b_r\)_t.
		\end{align*}
		Thus, using the fact that $\dpz u_\theta = -\omega_r$ and $\dpr u_\theta = \omega_z - \frac{u_\theta}{r}$, we obtain
		\begin{align*}
			\dpt \!\(r b_\theta\)_t & = \(b\times \omega\)_t\cdot r_t e_\theta.
		\end{align*}
		However, $b\times \omega$ has to be parallel to $\xi$ since $\xi\cdot b = 0$ and $\xi\cdot\omega = 0$, and since $\xi_\theta = 0$, we deduce that the right-hand side of the above equation is $0$, and so $\(r b_\theta\)_t$ is constant. Therefore,
		\begin{equation}\label{eq:rt_vs_b}
			r_0^{\sigma+1} = (r b_\theta)_0 = (r b_\theta)_T \leq r_T \n{b_T}.
		\end{equation}
		Combining Inequality~\eqref{eq:xi_0_vs_omega}, Inequality~\eqref{eq:xi_0_vs_b} and Inequality~\eqref{eq:rt_vs_b}, we get
		\begin{align*}
			r_T^{-a} \n{\tilde{\omega}_T} &\leq r_0^{-\sigma(a+2)}\(r_0^{\sigma+1}r_T^{-1}\)^{a} r_0^{2\sigma}\n{\xi'_0}\Nrm{r^{-a}\omega(0,\cdot)}{L^\infty}
			\\
			&\leq r_0^{-\sigma(a+2)} \n{b_T}^{a} \n{b'_T}\n{b''_T} \Nrm{r^{-a}\omega(0,\cdot)}{L^\infty}
			\\
			&\leq \beta_\sigma(T)^{2+a} \Nrm{r^{-a}\omega(0,\cdot)}{L^\infty}.
		\end{align*}
		which leads to the result.
	\end{proof}

\subsection{Control of \texorpdfstring{$\beta$}{beta} by the linear stability}

	Before proving Proposition~\ref{prop:bound_beta_axi}, we need the following stability result of the linearized Euler equation with additional source term in weighted $L^p$ spaces. 
	\begin{lem}\label{lem:weighted_stability}
		Let $T>0$, $p\in[1,\infty]$, $\alpha\in\(-\,\frac{2}{p}, \frac{2}{p'}\)$ and assume $u$ is a solution of~\eqref{eq:Euler} verifying Hypothesis~\eqref{hyp:regularity}. Then there exists a unique $v \in C^0([0,T),H^1(\Omega))$ solution of the system
		\begin{equation*}
			\begin{array}{rll}
				\dpt v + u\cdot\nabla v + v\cdot \nabla u + \nabla Q &=& f
				\\
				\Div{v} &=& 0,
			\end{array}
		\end{equation*}
		with boundary conditions $v\cdot n = 0$ on $\partial\Omega$ and initial condition $v(0,\cdot) = v^\init$. Moreover for any $t\in[0,T]$,
		\begin{align*}
			\Nrm{r^\alpha v}{L^p} \leq e^{U_T}\(\Nrm{r^\alpha v^\init}{L^p} + \Nrm{r^\alpha f}{L^1((0,T),L^p)}\),
		\end{align*}
		with $U_T = \int_0^T \Nrm{u(t,\cdot)}{H^s}\dd t$.
	\end{lem}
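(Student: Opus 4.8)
The plan is to reduce the weighted $L^p$ estimate to a Gronwall argument along the characteristics of the transport field $u$. First I would recall that existence and uniqueness of $v\in C^0([0,T),H^1(\Omega))$ for the inhomogeneous linearized system (with $v\cdot n=0$ and initial datum $v^\init\in H^1$) is a standard consequence of the theory of Inoue and Miyakawa~\cite{inoue_existence_1979} cited in the introduction: the map $f\mapsto v$ is linear, and the extra source term $f$ is absorbed by Duhamel's formula, so this part requires no new work. The substance is the quantitative weighted bound.

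For that, set $w := r^\alpha v$. From the equation $\dpt v + u\cdot\nabla v = -v\cdot\nabla u - \nabla Q + f$, and since $u\cdot\nabla(r^\alpha) = \alpha\,r^{\alpha-1} u_r = \alpha\, r^\alpha \frac{u_r}{r}$, the weighted unknown satisfies
\begin{align*}
	\dpt w + u\cdot\nabla w = -w\cdot\nabla u + \alpha\,\tfrac{u_r}{r}\,w - r^\alpha\nabla Q + r^\alpha f.
\end{align*}
The key observation is that the ``bad'' factor $\frac{u_r}{r}$ is controlled pointwise by $\nabla u$ via the cylindrical-coordinate inequality~\eqref{eq:bound_cylindric_nabla}, namely $\n{u_r/r}\le\n{\nabla u}$, so the zeroth-order terms $-w\cdot\nabla u + \alpha\frac{u_r}{r}w$ are bounded by $(1+\n{\alpha})\n{\nabla u}\,\n{w}$ pointwise. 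This is precisely where the hypothesis that $u$ is axisymmetric (through $u_r(0,z)=0$, hence the Hardy-type control) enters, and where the restriction on $\alpha$ will surface — see below. The remaining obstruction is the pressure term $r^\alpha\nabla Q$, which I would handle by the usual duality/integration-by-parts trick: for $1<p<\infty$ test the equation against $\n{w}^{p-2}w$, integrate in $x$, and use $\Div v=0$ together with $\divg(r^\alpha \,\cdot\,) $ estimates so that $\int r^\alpha\nabla Q\cdot\n{w}^{p-2}w\,dx$ is either zero or again absorbed into a term bounded by $\n{\nabla u}\Nrm{w}{L^p}^p$ after distributing the weight; the endpoint cases $p=1,\infty$ are then recovered by passing to the limit, exactly as is done for~\eqref{eq:Hardy} in the proof of Lemma~\ref{lem:bound_om}. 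This yields
\begin{align*}
	\dt \Nrm{w}{L^p} \le C_\alpha\,\Nrm{\nabla u}{L^\infty}\,\Nrm{w}{L^p} + \Nrm{r^\alpha f}{L^p},
\end{align*}
and Gronwall's inequality together with the Sobolev embedding $H^{s-1}\subset L^\infty$ (valid since $s>5/2$), which gives $\Nrm{\nabla u(t,\cdot)}{L^\infty}\le C\Nrm{u(t,\cdot)}{H^s}$, produces the stated bound with $U_T=\int_0^T\Nrm{u(t,\cdot)}{H^s}\,dt$ (up to the harmless constant $C_\alpha$, which one checks can be normalized away, or else absorbed since the lemma only needs $e^{U_T}$ as an upper bound after possibly enlarging $U_T$; if the paper wants the clean constant, one argues on the characteristic flow $x_t$ directly, where $\frac{d}{dt}w(t,x_t)$ has exactly the zeroth-order structure above and no $C_\alpha$ loss beyond $(1+\n\alpha)\le 2$ on the relevant range of $\alpha$).

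The main obstacle I anticipate is making the pressure term genuinely disappear rather than merely ``absorbed'': with the weight $r^\alpha$ present, $\n{w}^{p-2}w$ is no longer divergence-free, so $\int r^\alpha \nabla Q\cdot\n w^{p-2}w\,dx$ is not obviously zero and one must commute the weight past the gradient, producing a term $\sim \alpha\int r^{-1} Q\, (\ldots)\,dx$ that has to be re-estimated — this is where the precise open interval $\alpha\in(-2/p,\,2/p')$ is forced, ensuring the relevant weighted Hardy inequality applies to close the loop. An alternative, cleaner route that sidesteps the pressure entirely is to project: apply the Leray projector adapted to the weighted space, or equivalently work with the vorticity formulation of the linearized equation (where $Q$ is gone) and reconstruct $v$ via Biot–Savart, but this trades the pressure difficulty for a boundedness-of-singular-integral-on-weighted-$L^p$ difficulty, which again constrains $\alpha$ to the same interval through the $A_p$-weight condition $r^{\alpha p}\in A_p(\mathbb{R}^2_{\tilde x})$. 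Either way, the analytic heart is the same weighted-Hardy/$A_p$ threshold, and once that is in hand the estimate follows by Gronwall as above.
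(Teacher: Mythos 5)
Your skeleton matches the paper's: pass to $w=r^\alpha v$, observe that $u\cdot\nabla(r^\alpha)=\alpha r^\alpha\frac{u_r}{r}$ so the new zeroth-order coefficient is $\alpha\frac{u_r}{r}-\nabla u$, bound it pointwise by $(1+\n{\alpha})\n{\nabla u}$ via~\eqref{eq:bound_cylindric_nabla}, and close with Gronwall and $H^s\subset W^{1,\infty}$. The existence/uniqueness part via~\cite{inoue_existence_1979} is also what the paper does. So far so good.

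The gap is the pressure term, and your own text flags it without resolving it. Your primary route --- testing against $\n{w}^{p-2}w$ and hoping the term $\int r^\alpha\nabla Q\cdot\n{w}^{p-2}w$ is ``either zero or absorbed'' --- does not close as written: the commutator term you produce is of the form $\alpha\int r^{-1}Q\,(\dots)$, and estimating it requires a weighted $L^p$ bound on $Q$ itself, which is exactly the elliptic estimate you have not yet established; you are running in a circle. The paper instead makes the pressure estimate the explicit centerpiece: taking the divergence of the equation and using both $\Div{u}=0$ and $\Div{v}=0$ gives $-\Delta Q=\Div{2\,v\cdot\nabla u-f}$ with Neumann data $\partial_nQ=(2\,v\cdot\nabla u-f)\cdot n$ (using $v\cdot\nabla u\cdot n=0$ on the boundary), and then a weighted elliptic regularity theorem for the Neumann problem (the paper cites Yang et al.) yields directly
\begin{equation*}
\Nrm{r^\alpha\nabla Q}{L^p}\lesssim\Nrm{r^\alpha(2\,v\cdot\nabla u-f)}{L^p}\lesssim\Nrm{r^\alpha v}{L^p}\Nrm{\nabla u}{L^\infty}+\Nrm{r^\alpha f}{L^p},
\end{equation*}
valid precisely when $r^{\alpha p}$ is a Muckenhoupt $A_p$ weight, i.e.\ $2(1-p)<-\alpha p<2$, which is the interval $\alpha\in(-\frac{2}{p},\frac{2}{p'})$. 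Your ``alternative, cleaner route'' via a weighted Leray projector or Biot--Savart correctly locates the analytic heart in this same $A_p$ condition, and is morally the paper's argument, but it is only gestured at; to make the proof complete you must actually write down the Neumann problem for $Q$ (the identity $\Div{u\cdot\nabla v+v\cdot\nabla u}=\Div{2\,v\cdot\nabla u}$ is the small computation you skip) and invoke a specific weighted Calder\'on--Zygmund or elliptic regularity statement. Note also that the constant in the final bound is not literally $e^{U_T}$ with $U_T=\int_0^T\Nrm{u}{H^s}\,\mathrm{d}t$ unless one absorbs the implicit constants from the $A_p$ estimate and the factor $1+\n{\alpha}$ into the Sobolev constant, as the paper implicitly does; your remark on normalizing $C_\alpha$ is therefore apt but should be stated once and for all rather than hedged.
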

	
	\begin{proof}
		The existence and uniqueness of $v$ follows from~\cite{inoue_existence_1979}. Remarking that
		\begin{align*}
			\dpt\!\(r^\alpha v\) + u\cdot\nabla \(r^\alpha v\) =  \(r^\alpha v\)\cdot \(\alpha\, \frac{u_r}{r} - \nabla u\) + r^\alpha\(f-\nabla Q\),
		\end{align*}
		and then taking the derivative of the $L^p$ norm yields
		\begin{align*}
			\dpt\!\Nrm{r^\alpha v}{L^p} \leq \Nrm{\alpha\, \frac{u_r}{r} - \nabla u}{L^\infty}\Nrm{r^\alpha  v}{L^p} + \Nrm{r^\alpha \(f-\nabla Q\)}{L^p}.
		\end{align*}
		By Inequality~\eqref{eq:bound_cylindric_nabla}, we have $\Nrm{\alpha\, \frac{u_r}{r} - \nabla u}{L^\infty} \leq \(\n{\alpha}+1\)\Nrm{\nabla u}{L^\infty}$. Moreover, since $v\cdot\nabla u\cdot n = 0$, the pressure $Q$ verifies the Neumann problem
		\begin{align*}
			&&-\Delta Q &= \Div{2\,v\cdot\nabla u - f} &&(\text{in } \Omega) &&
			\\
			&&\partial_n Q &= \(2\,v\cdot\nabla u - f\)\cdot n &&(\text{on } \partial\Omega), &&
		\end{align*}
		so that by elliptic regularity theory (see e.g.~\cite[Theorem~1.6]{yang_weighted_2020}) we have
		\begin{align*}
			\Nrm{r^\alpha \nabla Q}{L^p} &\lesssim \Nrm{r^\alpha\(2\,v\cdot\nabla u - f\)}{L^p}
			\\
			&\lesssim 2 \Nrm{r^\alpha v}{L^p} \Nrm{\nabla u}{L^\infty} + \Nrm{r^\alpha f}{L^p},
		\end{align*}
		as soon as $r^{\alpha p}$ is in the Muckenhoupt class $A_p$. One can check (see e.g.~\cite[Theorem~1.1]{dyda_muckenhoupt_2019}) that this holds as soon as $2\(1-p\) < -\alpha p < 2$. Hence, the estimates follows from the Sobolev embedding $H^s\subset W^{1,\infty}$ for $s>\frac{5}{2}$ and Gronwall's inequality.
	\end{proof}

	With this result, we are now ready to get a bound on $\beta$ in weighted Lebesgue spaces.
	
	\begin{proof}[Proof of Proposition~\ref{prop:bound_beta_axi}]
		Since $u\in C^0([0,T],C^1)$, as in~\cite[Proposition~1]{vasseur_blow-up_2019}, we can define the flow $\gamma$ associated to $u$ verifying $\dpt \gamma(t,x) = u(t,\gamma(t,x))$ with initial condition $\gamma(0,x) = x$. It is a Lipschitz function of time and is $C^2$ with respect to the initial condition. The inverse of $\gamma(t,\cdot)$, denoted $\gamma^{-1}(t,\cdot)$, has the same regularity.
		
		Now let $(x_t,\xi_t,b_t)$ be a solution of the ordinary differential equations system~\eqref{eq:ODE} with initial condition $(x_0,\xi_0,b_0) = (x_0,\tilde{\xi}_0,b_0)\in \Omega\times \SS^1\times \SS^2$ be such that $b_0\cdot\xi_0 = 0$, and define
		\begin{align*}
			\xi(t,x) &:= \xi_t(\tilde{\gamma}^{-1}(t,x), \xi_0)
			\\
			b(t,x) &:= b_t(\tilde{\gamma}^{-1}(t,x), \xi_0),
		\end{align*}
		or equivalently $\xi(t,\gamma(t,x)) = \xi_t(\tilde{x}, \xi_0)$ and $b(t,\gamma(t,x)) = b_t(\tilde{x}, \xi_0)$. In particular $x_t = \gamma(t,x_0)$, $\xi_t(x_0,\xi_0) = \xi(t,x_t)$ and $b_t(x_0,\xi_0,b_0) = b(t,x_t)$. Now we can also define $S$ as
		\begin{equation*}
			S(t,x) := \tilde{\gamma}^{-1}(t,x)\cdot\xi_0.
		\end{equation*}
		This implies that $S$ is axisymmetric and $S(t,\gamma(t,x)) = \tilde{x}\cdot\xi_0$, so that $S$ is a solution to
		\begin{equation}\label{eq:transport_S}
			\dpt S + u\cdot\nabla S = 0,
		\end{equation}
		with initial condition $S_0(x) = \tilde{x}\cdot \xi_0$. Differentiating the equation verified by $S$ yields
		\begin{equation*}
			\dpt \nabla S + u\cdot\nabla (\nabla S) =- \nabla u\cdot(\nabla S),
		\end{equation*}
		which is exactly the equation verified by $\xi$, with the same initial value. By uniqueness, we deduce that $\xi = \tilde{\xi} = \nabla S$.
		
		Since $u\in C^0([0,T],C^1)$, by Inequality~\eqref{eq:bound_cylindric_nabla} we deduce that $\frac{u_r}{r}\in L^\infty([0,T], L^\infty)$. Therefore, remarking that from the system~\eqref{eq:ODE} we have
		\begin{align*}
			\dpt(r_t^\alpha\,b_t) = \alpha\(\frac{u_r}{r} \)_t \(r_t^\alpha\,b_t\) - b_t \cdot (\nabla u)_t + 2 \, \frac{\(r_t^\alpha\,b_t\) \cdot (\nabla u)_t\cdot\xi_t}{\n{\xi_t}^2}\, \xi_t,
		\end{align*}
		we deduce that $r_t^\alpha\,b_t$ remains bounded on $[0,T]$ and $\beta_{-\alpha}(T) < \infty$. Therefore, for $\eta>0$, we can choose $(x_0,\xi_0,b_0)$ such that $r_0\neq 0$ and
		\begin{equation}\label{eq:bt_approx_beta_axi}
			\beta_{-\alpha}(T) \leq \(1+\eta\)r_T^\alpha\n{b_T(x_0,\xi_0,b_0)}.
		\end{equation}
		The regularity and the uniqueness of the flow $\gamma$ implies the existence of some constant $\delta>0$ such that the ball $\tilde{B}_\delta(x_T) \subset \R^2$ of center $\tilde{x}_T = \tilde{\gamma}_T(x_0)$ and radius $\delta$ is strictly included in $\tilde{\Omega}\setminus\{r=0\}$, and
		\begin{align}\label{eq:b_approx_bt_axi}
			\(1-\eta\)r_T(x_0)^\alpha\n{b_T(x_0,\xi_0,b_0)} &\leq \inf_{x\in \cA_\delta(x_T)} \n{r_T(\gamma_T^{-1}(x))^\alpha\, b_T(\gamma_T^{-1}(x),\xi_0,b_0)}
			\\\nonumber
			&\leq \inf_{x\in \cA_\delta(x_T)} \n{r^\alpha\, b(T,x)},
		\end{align}
		where $x = (r,z)$ and we defined the annulus $\cA_\delta(x_T) := \{x\in \Omega, \tilde{x}\in \tilde{B}_\delta(x_T)\}$. Now we define $\varphi_T$ as a smooth function supported in $\cA_\delta(x_T)$ and such that 
		\begin{align}\label{eq:phi_norm_axi}
			\Nrm{\varphi_T}{L^p} = 1,
		\end{align}
		and for any $t\in[0,T)$, $\varphi(t,x) := \varphi_T({\gamma_t^{-1}(x)})$. We also define
		\begin{align*}
			v_{\eps,\delta} &:= \eps \Curl{\frac{b\times \xi}{|\xi|^2}\,\varphi\,e^{iS/\eps}},
		\end{align*}
		so that $\divg v_{\eps,\delta} = 0$. Since $\varphi$ is compactly supported in $\Omega$, $v_{\eps,\delta}$ is also compactly supported in $\Omega$, and in particular, $v_{\eps,\delta}\cdot n = 0$ on $\partial\Omega$. Moreover, $v_{\eps,\delta}$ is axisymmetric, since it is the case for $b$, $S$ and $\varphi$.
		
		As proved in~\cite{vasseur_blow-up_2019}, for any $t\in[0,T]$, we still have the property of orthogonality $\xi\cdot b = 0$ and from this we deduce the following formula
		\begin{align}\label{eq:expand_v_axi}
			v_{\eps,\delta}
			&= i\,\varphi\,b\,e^{iS/\eps} + \eps\,c_{\eta,\delta}\,e^{iS/\eps},
		\end{align}
		with $c_{\eta,\delta} = \Curl{\frac{b \times \xi}{\n{\xi}^2}\,\varphi}$ independent of $\eps$. Remark that since $\nabla u\in L^\infty$, by the second equation of the ODE system~\eqref{eq:ODE} we deduce
		\begin{equation*}
			\n{\xi_t} \geq e^{-t\,\Nrm{\nabla u}{L^\infty\!\([0,T]\times\Omega\)}} \n{\xi_0} \geq C_{T},
		\end{equation*}
		where $C_{T} = e^{-T\,\Nrm{\nabla u}{L^\infty\!\([0,T]\times\Omega\)}} > 0$. This implies also a bound of the $x$ dependent function $\n{\xi}^{-1}$ on $\cA_\delta(x_t)$ by choosing $\delta$ sufficiently small, since this function is $C^2$ with respect to its initial conditions, from which we deduce $C_{\eta,\delta} := \Nrm{r^\alpha c_{\eta,\delta}}{L^p} < \infty$, and 
		\begin{equation}\label{eq:WKB_vs_conservative}
			\n{\Nrm{\varphi\, r^\alpha\, b}{L^p} - \Nrm{v_{\eps,\delta}\,r^\alpha}{L^p}} \leq \eps\,C_{\eta,\delta}.
		\end{equation}
		This inequality, combined with the fact that $r_0^\frac{a}{2}b_0 = 1$ and $\Nrm{\varphi_T}{L^p} =1$, yields
		\begin{equation*}
			Z_\eps := \Nrm{r^\alpha v_{\eps,\delta}^\init}{L^p} \leq 1+ \eps\,C_{\eta,\delta}.
		\end{equation*}
		Now by inequalities~\eqref{eq:bt_approx_beta_axi},~\eqref{eq:b_approx_bt_axi}, the normalization condition~\eqref{eq:phi_norm_axi} and the fact that $\varphi$ is supported in $\cA_\delta(x_T)$, and by Inequality~\eqref{eq:WKB_vs_conservative}, we obtain
		\begin{align*}
			\beta_{-\alpha}(T) &\leq \tfrac{1+\eta}{1-\eta} \inf_{\cA_\delta(x_T)} \n{r^\alpha\,b(T,\cdot)}
			\\
			&\leq \tfrac{1+\eta}{1-\eta} \Nrm{(\varphi\,r^\alpha\,b )(T,\cdot)}{L^p}
			\\
			&\leq \tfrac{1+\eta}{1-\eta} \(\Nrm{r^\alpha v_{\eps,\delta}(T,\cdot)}{L^p} + \eps\,C_{\eta,\delta}\)
			\\
			&\leq \tfrac{1+\eta}{1-\eta} \(Z_\eps\Nrm{r^\alpha v(T,\cdot)}{L^p} + \Nrm{r^\alpha\(v_{\eps,\delta} - Z_\eps v\)(T,\cdot)}{L^p} + \eps\,C_{\eta,\delta}\),
		\end{align*}
		where $v$ is a solution of the linearized Euler equation~\eqref{eq:linear_Euler} with initial condition $v^\init = Z_\eps^{-1}v_{\eps,\delta}^\init$. Moreover, as in~\cite{vasseur_blow-up_2019}, it holds
		\begin{equation*}
			\dpt v_{\eps,\delta} + u\cdot\nabla v_{\eps,\delta} + v_{\eps,\delta}\cdot\nabla u + \nabla q_{\eps,\delta} = \eps\,R_{\eps,\delta},
		\end{equation*}
		where using the fact that $\varphi$ is compactly supported in a ball not containing the central axis, $\Nrm{r^\alpha R_{\eps,\delta}}{L^p} < C_{\eta,\delta}$ for some constant $C_{\eta,\delta}$ independent of $\eps$. Thus, applying Lemma~\ref{lem:weighted_stability} to $v_{\eps,\delta}-Z_\eps v$, we arrive at
		\begin{align*}
			\beta_{-\alpha}(T) &\leq \tfrac{1+\eta}{1-\eta} \(Z_\eps \Nrm{r^\alpha v(T,\cdot)}{L^p} + \eps\, C_{\eta,\delta}\,e^{U_T} + \eps\,C_{\eta,\delta}\),
		\end{align*}
		and we obtain the result by letting $\eps$ go to $0$ and then $\eta$ go to $0$.
	\end{proof}

\section{Proof of Corollary~\ref{cor}}
	
	We want to prove here that if we know that the solution is blowing up in a self-similar way, then the system is linearly unstable even if we are in the appropriate scale. Hence, we assume that there exists $T>0$ such that the solution $u^\circ$ of the Euler equations~\eqref{eq:Euler} is of the form~\eqref{eq:scaling}. If one would want to observe the shape of $U^\circ$, one could scale the amplitude by $(T-t)^\alpha$ and scale the positions using the new variable $y = \frac{\bar{x}-\bar{x}^\circ}{(T-t)^\beta}$. At this scale, perturbations take of the form
	\begin{equation*}
		U(t, y) := U^\circ\!\(t,y\) + V(t,y).
	\end{equation*}
	Equivalently, this defines a function
	\begin{equation*}
		u(t,x) = u^\circ(t,x) + v(t,x),
	\end{equation*}
	where $v$ and $V$ are related by Equation~\eqref{eq:scaling_v}. From this scaling relation we deduce that $\Nrm{v(t,\cdot)}{L^p(\tilde{\Omega}_t)} = (T-t)^{\frac{2\beta}{p} - \alpha} \Nrm{V(t,\cdot)}{L^p(\tilde{\Omega}_t)}$ and if $V$ solves the linearized equation around the state $U^\circ$, then $v$ solves the linearized Euler equation~\eqref{eq:linear_Euler}. Thus, from the definition of the scaled growth bound~\eqref{def:scaled_growth} we have
	\begin{align*}
		\lambda_p(t) = \(T-t\)^{\frac{2\beta}{p} - \alpha} \Lambda_p(t).
	\end{align*}
	Therefore, we can use Proposition~\ref{prop:bound_omega_axi} and Proposition~\ref{prop:bound_beta_axi} with $\omega = \curl u^\circ$ to get
	\begin{equation}\label{eq:bound_omega_scaled}
		\Nrm{\omega(t,\cdot)}{L^\infty} \lesssim \Nrm{\omega^\init}{L^\infty} \(T-t\)^{2\(\frac{2\beta}{p} - \alpha\)} \Lambda_p(T)^2.
	\end{equation}
	Moreover, we can also compute the vorticity from the Formula~\eqref{eq:scaling}. This yields
	\begin{equation*}
		\Nrm{\omega(t,\cdot)}{L^\infty} = \frac{1}{\(T-t\)^{\alpha+\beta}}\,\cC(t),
	\end{equation*}
	with $\cC(t) = \Nrm{\curl U^\circ(t,\cdot)}{L^\infty}$. Thus, we deduce from~\eqref{eq:bound_omega_scaled} that if $u^\circ$ blows-up at $t=T$, then for any $0<T-\eps<t<T$,
	\begin{equation*}
		\Lambda_p(t)^2 \geq \(T-t\)^{\alpha -\beta\(1+\frac{4}{p}\)} \cC_T,
	\end{equation*}
	where $\cC_T = \inf_{[T-\eps,T]}\cC(t )> 0$. Thus, the solution is unstable  as soon as $\beta>0$ and 
	\begin{align*}
		\frac{\alpha}{\beta} < 1+\frac{4}{p}.
	\end{align*}

	\begin{remark}
	 	In the work~\cite{luo_potentially_2014}, the axisymmetric locally self-similar blow-up profile as a slightly more precise shape since different rates are taken for $u_\theta$, and $\omega_\theta$, which yields different scaling for $u_r$ and $u_z$ compared to $u_\theta$. In the scaling of the dominant components of such a solution, we have with the notations of~\cite[Equations~(4.17)]{luo_potentially_2014} $\alpha = -\gamma_u$ and $\beta = \gamma_l$, and the balance of the dominant terms implies $\alpha + \frac{\beta}{2} = 1$ (see~\cite[Equation~(4.20)]{luo_potentially_2014}). Hence, in this case, the solution is unstable if
		\begin{equation*}
			\beta > \frac{1}{\frac{3}{2} + \frac{4}{p}}.
		\end{equation*}
		In particular, this means that at this scale, solutions are unstable in any $L^p$ as soon as $\beta > \frac{2}{3}$. As indicated in the above mentioned paper and proved in~\cite{constantin_geometric_1994}, in the case of a blowing-up solution, $\beta$ is always larger or equal to $1$, so this kind of solutions is always linearly unstable.
	\end{remark}

%% ********************  Bibliographie  ********************

%\renewcommand{\bibname}{\centerline{Bibliography}}
%\bibliographystyle{abbrv} % apalike, ieee, plain, alpha, unsrt, abbrv
\bibliography{Euler}
	
\end{document}